\def\underset#1#2{{\mathrel{\mathop {{}_{} {#2}}\limits_{{#1}_{}}}}}
\def\upplim_#1{\underset{#1}{\overline\lim}\;}
\def\lowlim_#1{\underset{#1}{\underline\lim}\;}
\newtheorem{definition}[equation]{Definition}
\newtheorem{claim}[equation]{\indent{\it Claim}\rm }
\newtheorem{lemma}[equation]{Lemma}
\newtheorem{theorem}[equation]{Theorem}
\newcommand{\C}{{\mathbf{C}}}
\newcommand{\N}{\mathbf{N}}
\renewcommand{\P}{{\mathbf{P}}}
\numberwithin{equation}{section}
\title[Schmidt's subspace theorem for moving hypersurfaces]{Schmidt's subspace theorem for moving hypersurfaces in subgeneral position} 
\author{Si Duc Quang}
\begin{document}

\maketitle 

\begin{abstract}
In this paper, we establish a Schmidt's subspace theorem for moving hypersurfaces in weakly subgeneral position. Our result generalizes the previous results on Schmidt's theorem for the case of moving hypersurfaces. 
\end{abstract}

\def\thefootnote{\empty}
\footnotetext{
2010 Mathematics Subject Classification:
Primary 11J68; Secondary 11J25, 11J97.\\
\hskip8pt Key words and phrases: Second main theorem; Diophantine approximation; Schmidt's subspace theorem; moving hypersurface.}

\section{Introduction}
It is known that there is a close relation between Nevanlinna theory and Diophantine Approximation due to the works of Osgood (see \cite{Os1, Os2}) and Vojta (see \cite{V}). Especially, Vojta has given a dictionary which provides the correspondences for the fundamental concepts of these two theories. With this dictionary, one can translate results from Nevanlinna theory into corresponding results from Diophantine Approximation and vice versa. In Nevanlinna theory, the main goal is to establish the second main theorem, which is an inequality where the characteristic functions of meromorphic functions or mappings are bounded above by a sum of certain counting functions. The result in Diophantine Approximation corresponding to the second main theorem is the so-called Schmidt's subspace theorem. Over the last decades much research on these theorems has been done. Many results have been given, for instance \cite{CRY,DT,LG,RS,RV,Shi}.To state some of them, we recall the following.

Let $k$ be a number field. Denote by $M_k$ the set of places (equivalence classes of absolute values) of $k$ and by $M^\infty_k$ the set of archimedean places of $k$. For each $v\in M_k$, we choose the normalized absolute value $|\cdot |_v$ such that $|\cdot |_v=|\cdot|$ on $\mathbf Q$ (the standard absolute value) if $v$ is archimedean, and $|p|_v=p^{-1}$ if $v$ is non-archimedean and lies above the rational prime $p$. For a valuation $v$ of $k$, denote by $k_v$ the completion of $k$ with respect to $v$ and set $$n_v := [k_v :\mathbf Q_v]/[k :\mathbf Q].$$
We put $||x||_v=|x|^{n_v}_v$. The product formula is stated as follows
$$\prod_{v\in M_k}||x||_v=1,\text{ for }  x\in k^*.$$

For $x = (x_0 ,\ldots , x_n)\in k^{n+1}$, define
$$||x||_v :=\max\{||x_0||_v,\ldots,||x_n||_v\},\ v\in M_k.$$
We define the absolute logarithmic height of a projective point $x=(x_0:\cdots :x_n)\in\P^n(k)$ by
$$h(x):=\sum_{v\in M_k}\log ||x||_v.$$
By the product formula, this does not depend on the choice of homogeneous coordinates $(x_0:\cdots :x_n)$ of $x$. If $x\in k^*$, we define the absolute logarithmic height of $x$ by
$$h(x):=\sum_{v\in M_k}\log^+ ||x||_v,$$
where $\log^+a=\log\max\{1,a\}.$

Let $\N_0=\{0,1,2,\ldots\}$ and for a positive integer $d$, we set
$$\mathcal T_d :=\{(i_0,\ldots, i_n)\in\N_0^{n+1}\ :\ i_0+\cdots +i_n=d\}.$$
Let $Q=\sum_{I\in\mathcal T_d}a_Ix^I$ be a homogeneous polynomial of degree $d$ in $k[x_0,\ldots, x_n]$, where $x^I = x^{i_0}\ldots x^{i_n}_n$ for $x=(x_0,\ldots, x_n)$  and $I = (i_0,\ldots,i_n)$. Define $||Q||_v =\max\{||a_I||_v; I\in\mathcal T_d\}$. The height of $Q$ is defined by
$$h(Q)=\sum_{v\in M_k}\log ||Q||_v.$$
For each $v\in M_k$, we define the Weil function $\lambda_{Q,v}$ by
$$\lambda_{Q,v}(x):=\log\frac{||x||_v^d\cdot ||Q||_v}{||Q(x)||_v},\ x\in\P^n(k)\setminus\{Q=0\}.$$ 

Let $\Lambda$ be an infinite index set. Let $A\subset\Lambda$ be an infinite index subset and $a$ a map $A\rightarrow k$.  We denote this map by $(A,a)$.

\begin{definition}\label{def1.1}{\rm
Let $A\subset\Lambda$ be an infinite index subset and $C_1,C_2\subset A$ subsets
of $A$ with finite complement. Two pairs $(C_1, a_1)$ and $(C_2, a_2)$ are said to be  equivalent if there is a subset $C\subset C_1\cap C_2$ such that $C$ has finite complement in $A$ and such that the restrictions of $a_1,a_2$ to $C$ coincide. Denote by $\mathcal R^0_A$ the set of all equivalence classes of pairs $(C, a)$. Then $\mathcal R^0_A$ has a ring structure. Moreover, we can embed $k$ into $\mathcal R^0_A$ as constant functions.}
\end{definition}

In this paper, we regard a collection of points $\{x(\alpha)\in\P^n(k)\ |\ \alpha\in\Lambda\}$ as a map $x:\Lambda\rightarrow\P^n(k)$. We call a moving hypersurface of degree $d$ in $\P^n(k)$ a homogeneous polynomial in $\mathcal R^0_\Lambda [x_0,\ldots, x_n]$ of the form
$$Q(\alpha)=\sum_{I\in\mathcal T_d}a_I(\alpha)x^I,\text{ for all }\alpha\in\Lambda,$$
where $a_I\ (I\in\mathcal T_d)$ are collections of points, not all zero.

Let $Q_1(\alpha),\ldots,Q_q(\alpha),\ (\alpha\in\Lambda)$ be $q$-moving hypersurfaces in $\P^n(k)$ of degrees $d_1,\ldots, d_q$ respectively. Suppose that $Q_i(\alpha)$ is given by 
$$Q_i(\alpha)=\sum_{I^i\in\mathcal T_{d_i}}a_{i,I^i}(\alpha)x^{I^i}.$$

\begin{definition}\label{def1.2}{\rm
An infinite subset $A\subset\Lambda$  is said to be coherent with respect to 
$\{Q_j\}^q_{j=1}$ if for every polynomial $P\in k[...,x_{1,I^1},...,x_{j,I^j},...,x_{q,I^q},...]$, $I^j\in\mathcal T_{d_j}$, which is homogeneous in $(...,x_{j,I^j},...)$ for each $j = 1,\ldots, q$, either $P(...,a_{1,I^1}(\alpha),\ldots,a_{j,I^j}(\alpha),...,a_{q,I^q},...)$ vanishes for all $\alpha\in A$, or it vanishes for only finitely many $\alpha\in A.$}
\end{definition}

\begin{lemma}[{see \cite[Lemma 1.4]{LG} and also \cite[Lemma 2.1]{CRY}}]\label{lem1.3}
There exists an infinite subset $A\in\Lambda$ which is coherent with respect
to $\{Q_j\}^q_{j=1}.$
\end{lemma}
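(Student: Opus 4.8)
The plan is to combine the countability of a number field with a greedy recursion that decides, one polynomial at a time, whether a given polynomial relation among the coefficient collections holds ``generically'' or only ``exceptionally'', and then to use a Noetherian stabilization to stop after finitely many steps and read off one infinite coherent set; a classical diagonal argument is an equally good alternative.

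First I would reduce to a countable family of conditions. Since $k$ is a number field it is countable, hence so is the polynomial ring $k[\dots,x_{j,I^j},\dots]$ in the finitely many variables $x_{j,I^j}$ ($1\le j\le q$, $I^j\in\mathcal{T}_{d_j}$); in particular the set $\mathcal{P}$ of those polynomials that are homogeneous in each block $(x_{j,I^j})_{I^j\in\mathcal{T}_{d_j}}$ for every $j$ is countable, and I enumerate it as $P_1,P_2,\dots$. Writing $a(\alpha):=(\dots,a_{j,I^j}(\alpha),\dots)$ for $\alpha\in\Lambda$, coherence of a subset $A$ says exactly that for every $m$ the function $\alpha\mapsto P_m(a(\alpha))$ either vanishes on all of $A$ or on only finitely many $\alpha\in A$.

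Next I would build a descending chain of infinite subsets $\Lambda=A_0\supseteq A_1\supseteq\cdots$: given $A_{m-1}$ infinite, set $Z_m:=\{\alpha\in A_{m-1}:P_m(a(\alpha))=0\}$; since $A_{m-1}=Z_m\sqcup(A_{m-1}\setminus Z_m)$ is infinite, at least one summand is, and I set $A_m:=Z_m$ if $Z_m$ is infinite and $A_m:=A_{m-1}\setminus Z_m$ otherwise. Then $A_m$ is infinite, and $P_m(a(\cdot))$ is identically $0$ on $A_m$ in the first case and nowhere $0$ on $A_m$ in the second. The point that needs care is that $\bigcap_m A_m$ may be finite, so no single $A_m$ and no naive intersection is visibly coherent for \emph{all} $P_m$ simultaneously; I would fix this by observing that the ideals $I_m:=\{f\in k[\dots,x_{j,I^j},\dots]:f(a(\alpha))=0\ \text{for all }\alpha\in A_m\}$ form an ascending chain in a Noetherian ring, hence $I_m=I_M$ for all $m\ge M$ for some $M$, and then taking $A:=A_M$.

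Finally I would verify that $A:=A_M$ is coherent. For $P=P_m$ with $m\le M$ this is immediate, since $A\subseteq A_m$ and $P_m(a(\cdot))$ is already ``all or nothing'' on $A_m$. For $m>M$: if $P_m(a(\cdot))\equiv 0$ on $A_m$ then $P_m\in I_m=I_M$, so $P_m(a(\cdot))\equiv 0$ on all of $A_M$; otherwise $P_m(a(\cdot))$ is nowhere $0$ on $A_m$, which forces $Z_m$ to be finite, and each refinement step after $M$ discards only finitely many indices (in the branch $A_l=Z_l$ the relation $P_l\in I_l=I_M$ forces $Z_l=A_{l-1}$, so nothing is discarded; the other branch discards the finite set $Z_l$), whence $A_M\setminus A_{m-1}$ is finite and $P_m(a(\cdot))$ vanishes on $A_M$ only on the finite set $Z_m\cup(A_M\setminus A_{m-1})$. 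The main obstacle is precisely this passage from ``each polynomial condition separately'' to ``all countably many simultaneously on one infinite set''; there is no hard estimate, only this bookkeeping, and it rests on $k$ being countable (over an uncountable ground field one would need a transfinite recursion). Alternatively one may extract $A=\{\alpha_m:m\ge 1\}$ by the usual diagonal choice $\alpha_m\in A_m$ with the $\alpha_m$ distinct; then each $P_m(a(\cdot))$ vanishes on all but finitely many, or on only finitely many, elements of $A$, which is harmless since the $a_{j,I^j}$ are elements of $\mathcal{R}^0_\Lambda$.
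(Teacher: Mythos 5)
Your main argument is correct, and it is worth noting that the paper itself does not prove this lemma but cites it from Giang and Chen--Ru--Yan, whose proof (going back to Ru--Vojta) is different and shorter: one considers, for each infinite subset $A\subset\Lambda$, the vanishing ideal $I(A)=\{f\in k[\ldots,x_{j,I^j},\ldots]: f(\ldots,a_{j,I^j}(\alpha),\ldots)=0 \text{ for all }\alpha\in A\}$, picks by Noetherianity an infinite $A$ with $I(A)$ maximal among all such ideals, and then observes that if a block-homogeneous $P$ vanishes at infinitely many $\alpha\in A$, the infinite subset $A'=\{\alpha\in A: P(a(\alpha))=0\}$ satisfies $I(A')\supseteq I(A)+(P)$, so maximality forces $P\in I(A)$, i.e.\ $P$ vanishes on all of $A$. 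That route needs no enumeration and no countability of $k$, so it works verbatim over any field and any infinite $\Lambda$, whereas your enumeration-plus-refinement scheme uses that $k$ is countable (harmless for number fields) and replaces the single maximality step by a chain stabilization $I_m=I_M$; your bookkeeping for $m>M$ (branch $A_l=Z_l$ discards nothing because $P_l\in I_l=I_M$, the other branch discards only the finite $Z_l$) is exactly what makes your construction close correctly, and it does. One caveat: your closing ``diagonal'' alternative is not sufficient as stated, because it only yields that each $P_m$ vanishes at \emph{all but finitely many} or at finitely many elements of $A$, while the paper's definition of coherence demands ``all or finitely many''; to repair it you would need a further step (e.g.\ note that the polynomials vanishing at all but finitely many points of $A$ form an ideal, take finitely many generators, and delete their finitely many exceptional indices), which is essentially another instance of the Noetherian argument you already use.
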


Let $A\subset\Lambda$ be an infinite index subset which is coherent with respect to $\{Q_j\}^q_{j=1}$. If $j\in\{1,\ldots, q\}$ and $I\in\mathcal T_{d_j}$ satisfy $a_{j,I}(\alpha)\ne 0$ for some $\alpha\in  A$, then the set $\{\alpha\in A\ |\ a_{j,I}(\alpha)\ne 0\}$ has finite complement in $A$ by the coherence of $A$. Hence, the pair 
\begin{align*}
\begin{array}{ccc}
\{\alpha\in  A\ |\ a_{j,I}(\alpha)\ne 0\}&\longrightarrow &k,\\
 \alpha&\mapsto &a_{j,J}(\alpha)/a_{j,I}(\alpha)
\end{array}
\end{align*}
belongs to $\mathcal R^0_A$. We denote by $\mathcal R^0_{A,\{Q_i\}_{i=1}^q}$ the subring of $\mathcal R^0_A$ generated over $k$ by all such pairs. Then $\mathcal R^0_{A,\{Q_i\}_{i=1}^q}$ is an entire ring (see \cite{LG}). 

\begin{definition}\label{def1.4}{\rm
We define $\mathcal R_{A,\{Qj\}^q_{j=1}}$ to be the quotient field of $\mathcal R_{A,\{Qj\}^q_{j=1}}^0$.}
\end{definition}
\noindent
We would like to note that if $B\subset  A\subset\Lambda$ are two infinite index subsets and $A$ is coherent with respect to $\{Q_j\}_{j=1}^q$, then so is $B$, and $\mathcal R_{A,\{Qj\}^q_{j=1}}=\mathcal R_{B,\{Qj\}^q_{j=1}}$.

\begin{definition}\label{def1.5}{\rm
Let $x:\Lambda\rightarrow\P^n(k)$ be a collection of points. We say that $x$ is
algebraically non-degenerate (resp. linearly non-degenerate) over $\mathcal R_{\{Qj\}^q_{j=1}}$ if for all infinite subsets $A\subset\Lambda$ that are coherent with respect to $\{Q_j\}^q_{j=1}$, there is no non-zero homogeneous polynomial (resp. non-zero homogeneous polynomial of degree $1$) $Q\in\mathcal R_{A,\{Qj\}^q_{j=1}}[x_0,\ldots,x_n]$ such that $Q(x_0(\alpha),\ldots, x_n(\alpha))=0$, for all $\alpha\in A$ outside a finite subset of $A$.}
\end{definition}

\begin{definition}\label{def1.6}{\rm
We say that a set $\{Q_j\}^q_{j=1}\ (q\ge N+1)$ of homogeneous polynomials
in $\mathcal R^0_\Lambda [x_0,\ldots, x_n]$ is in weakly $N$-subgeneral position if there exists an infinite subset $A\subset\Lambda$ with finite complement such that for any $1\le j_0 <\cdots < j_{N+1}\le q$, and $\alpha\in  A$, the system of equations
\begin{align}\label{1.1}
Q_{j_i}(\alpha)(x_0,\ldots, x_n) = 0, 0\le i \le N,
\end{align}
has only the trivial solution $(x_0,\ldots,x_n)=(0,\ldots,0)$ in $\overline{k}^{n+1}$, where $\overline{k}$ is an algebraic closure of $k$. Furthermore, the set $\{Q_j\}^q_{j=1}\ (q\ge N+1)$ is said to be in $N$-subgeneral position if the equation (\ref{1.1}) has only the trivial solution $(x_0,\ldots,x_n)=(0,\ldots,0)$ in $\overline{k}^{n+1}$ for all $\alpha\in\Lambda$.}
\end{definition}
We will say that the set $\{Q_j\}^q_{j=1}\ (q\ge N+1)$ is in general position (resp. weakly general position) if it is in $n$-general position (resp. weakly $n-$general position).
 
We now state some pairs of corresponding second main theorem in Nevanlinna theory and Schmidt's subspace theorem for the case of moving targets. Firstly, M. Ru - W. Stoll \cite{RS} proved the following second main theorem for moving hyperplanes (this result was also reproved by M. Shirosaki \cite{Shi} with a simpler proof).

\vskip0.2cm
\noindent
\textbf{Theorem A} (cf. \cite{RS,Shi}). {\it Let $\{a_i\}_{i=1}^q$ be hyperplanes in general position in $\P^n(\C)$. Let $f:  {\C}^m \to \P^n(\C)$ be a meromorphic mapping such that $f$ is linearly nondegenerate over $\mathcal R_{\{a_i\}}$. Then for every $\epsilon >0$, we have}
$$||\ \ (q-n-1-\epsilon)T_f(r) \leq \sum_{i=1}^q N_{(f,a_i)}(r)+ o(T_f(r)).$$
Here, by the notation ``$\| P$'' we mean that the assertion $P$ holds for all $r\in [0,\infty)$ excluding a Borel subset $E$ of the interval $[0,\infty)$ with $\int_E dr<\infty$.

Corresponding to this theorem, M. Ru and P. Vojta \cite{RV} gave the following Schmidt's subspace theorem with moving hyperplane targets. 

\vskip0.2cm
\noindent
\textbf{Theorem B} (cf. \cite{RV}). {\it Let $k$ be a number field, $S$ be a finite set of places of $k$, let $q\ge n+1$ be a positive integer and $\epsilon >0$. Let $\Lambda$ be an infinite index set, let $H_1,\ldots, H_q$ be moving hyperplanes in $\P^n(k)$ of degrees $d_1,\ldots, d_q$ respectively and let $x :\Lambda\rightarrow\P^n(k)$ be a collection of points such that:

$\mathrm{(1)}$ $H_1,\ldots, H_q$ is in general position;

$\mathrm{(2)}$ $x$ is linearly non-degenerate over $\mathcal R_{\{H_j\}^q_{j=1}}$;

$\mathrm{(3)}$ $ h(H_j(\alpha)) = o(h(x(\alpha)))\text{ for all } j=1,\ldots, q.$

\noindent
Then there exists an infinite index subset $A\subset\Lambda$ such that
$$\sum_{v\in S}\sum_{j=1}^q\lambda_{H_j(\alpha),v}(x(\alpha))\le ( n+1+\epsilon)h(x(\alpha))$$
for all $\alpha\in  A$.}

\vskip0.2cm 
Later on, G. Dethloff and T. V. Tan extended the second main theorem of M. Ru and W. Stoll to the case of moving hypersurfaces. Following the method of  Dethloff and Tan, L. Giang \cite{LG} and Z. Chen, M. Ru and Q. Yan \cite{CRY} proved a corresponding Schmidt's subspace theorem for moving hypersurfaces in general position. Recently, in \cite{Q16-2}, we extended the result of Dethloff and Tan to the case of moving hypersurfaces in weakly subgeneral position as follows.

\vskip0.2cm 
\noindent
\textbf{Theorem C} (cf. \cite{Q16-2}, see also \cite{Q16-1}). {\it Let $f$ be a nonconstant meromorphic map of $\mathbf{C}^m$ into $\P^n(\mathbf{C})$. Let $\{Q_i\}_{i=1}^q$ be a family of slowly (with respect to $f$) moving hypersurfaces in weakly $N$-subgeneral position with $\deg Q_j = d_j\ (1\le i\le q).$ Assume that $f$ is algebraically nondegenerate over $\mathcal R_{\{Q_i\}_{i=1}^q}$.  Then for any $\epsilon >0$, we have
$$  ||\ (q-(N-n+1)(n+1)-\epsilon)T_f(r)\le \sum_{i=1}^{q}\dfrac{1}{d_i}N^{[L_j]}_{Q_i(f)}(r)+o(T_f(r)),$$
where $L_j$ is a positive number (explicitly estimated).}

\vskip0.2cm
Our main result in this paper is motivated by the relationship between Nevanlinna theory and Diophantine approximation. We will prove a Schmidt's subspace fo moving hypersurfaces in weakly subgeneral position which corresponds to Theorem C. Namely, we will prove the following.

\noindent 
\textbf{Main Theorem.}\ {\it Let $k$ be a number field, $S$ be a finite set of places of $k$, let $q\ge N+1$ be a positive integer and $\epsilon >0$. Let $\Lambda$ be an infinite index set, let $Q_1,\ldots, Q_q$ be moving hypersurfaces in $\P^n(k)$ of degrees $d_1,\ldots, d_q$ respectively and let $x :\Lambda\rightarrow\P^n(k)$ be a collection of points such that:

$\mathrm{(1)}$ the family of polynomials $Q_1,\ldots, Q_q$ is in weakly $N$-subgeneral position;

$\mathrm{(2)}$ $x$ is algebraically non-degenerate over $\mathcal R_{\{Qj\}^q_{j=1}}$;

$\mathrm{(3)}$ $ h(Q_j(\alpha)) = o(h(x(\alpha)))\text{ for all } j=1,\ldots, q.$

\noindent
Then there exists an infinite index subset $A\subset\Lambda$ such that
$$\sum_{v\in S}\sum_{j=1}^q\frac{1}{d_j}\lambda_{Q_j(\alpha),v}(x(\alpha))\le ( (N-n+1)(n+1)+\epsilon)h(x(\alpha))$$
for all $\alpha\in  A$.}

\vskip0.2cm 
We would like to note that, when the family of moving hypersurfaces is in weakly general position, i.e., $N=n$, our above result will imply the Schmidt's subspace theorems of L. Giang and also of Z. Chen, M. Ru and Q. Yan.

\section{Some lemmas and theorems}

We recall some algebraic lemmas from \cite{CZ,DT}

\begin{lemma}[{see \cite[Lemma 2.2]{CZ}}]\label{2.5}
Let $A$ be a commutative ring and let $\{\phi_1,\ldots ,\phi_p\}$ be a regular sequence in $A$, i.e., for $i=1,\ldots ,p,\phi_i$ is not a zero divisor of $A/(\phi_1,\ldots ,\phi_{i-1})$. Denote by $I$ the ideal in $A$ generated by $\phi_1,\ldots ,\phi_p$. Suppose that for some $q,q_1,\ldots ,q_h\in A$, we have an equation
$$\phi_1^{i_1}\cdots\phi_p^{i_p}\cdot q=\sum_{r=1}^h\phi_1^{j_1(r)}\cdots\phi_p^{j_p(r)}\cdot q_r, $$
where $(j_1(r),\ldots ,j_p(r))>(i_1,\ldots ,i_p)$ for $r=1,\ldots ,h$. Then $q\in I$.
\end{lemma}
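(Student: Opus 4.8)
\medskip
\noindent\emph{Proof plan.}
The cleanest route I would take is via the associated graded ring. Set $G=\mathrm{gr}_I(A)=\bigoplus_{m\ge 0}I^m/I^{m+1}$. Since $\{\phi_1,\dots,\phi_p\}$ is a regular sequence, the classical structure theorem gives a graded $A/I$-algebra isomorphism
$$(A/I)[T_1,\dots,T_p]\ \xrightarrow{\ \sim\ }\ G,\qquad T_l\longmapsto \phi_l\bmod I^2 .$$
This is exactly the point at which the regular-sequence hypothesis is used, as opposed to the mere fact that the $\phi_l$ generate $I$; equivalently, it is the statement that a regular sequence has polynomial associated graded ring. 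The preliminary observation I would record is that, under this description, the class of $\phi_1^{i_1}\cdots\phi_p^{i_p}\cdot q$ in the degree-$(i_1+\dots+i_p)$ part of $G$ corresponds to $(q\bmod I)\cdot T_1^{i_1}\cdots T_p^{i_p}$, because multiplication in $G$ is induced by that of $A$.

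With this in hand the argument is short. Write $|i|=i_1+\dots+i_p$. The hypothesis $(j_1(r),\dots,j_p(r))>(i_1,\dots,i_p)$, read componentwise with at least one strict inequality, forces $j_1(r)+\dots+j_p(r)\ge |i|+1$, so each summand $\phi_1^{j_1(r)}\cdots\phi_p^{j_p(r)}\cdot q_r$ lies in $I^{|i|+1}$; hence the left-hand side $\phi_1^{i_1}\cdots\phi_p^{i_p}\cdot q$ lies in $I^{|i|+1}$ too. Therefore its class in $I^{|i|}/I^{|i|+1}$ vanishes, i.e.\ $(q\bmod I)\cdot T_1^{i_1}\cdots T_p^{i_p}=0$ in $(A/I)[T_1,\dots,T_p]$; comparing the coefficient of the monomial $T_1^{i_1}\cdots T_p^{i_p}$ yields $q\bmod I=0$, that is, $q\in I$.

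If one prefers not to invoke the structure of $G$, I would instead give an elementary double induction that reproves what is needed: induct on $p$, and for fixed $p$ on $|i|$. If $i=0$ the claim is immediate, every term on the right already lying in $I$. Otherwise let $l_0$ be the least index with $i_{l_0}\ge 1$. If $l_0=1$ then $j_1(r)\ge 1$ for every $r$, so one cancels $\phi_1$ (a non-zero-divisor of $A$) from both sides and obtains an equation of the same shape with $i$ replaced by $i-(1,0,\dots,0)$; iterating drives $i_1$ to $0$. Once $l_0\ge 2$, one reduces modulo $(\phi_1,\dots,\phi_{l_0-1})$: all right-hand terms containing $\phi_1,\dots,\phi_{l_0-1}$ disappear, $\{\overline{\phi_{l_0}},\dots,\overline{\phi_p}\}$ is again a regular sequence in $A/(\phi_1,\dots,\phi_{l_0-1})$, and the resulting identity is an instance of the lemma with only $p-l_0+1<p$ generators, so by induction $\overline q\in(\overline{\phi_{l_0}},\dots,\overline{\phi_p})$ and hence $q\in I$. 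The only points requiring care are that a regular sequence restricts to a regular sequence modulo an initial segment, and that the multi-index inequality is genuinely componentwise; there is no essential obstacle here, which is why the paper simply quotes the statement.
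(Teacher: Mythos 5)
The paper gives no proof of this lemma at all (it is quoted from Corvaja--Zannier \cite{CZ}), so the only question is whether your argument is correct, and your \emph{primary} route is not: it misreads the hypothesis. Immediately after the lemma the paper fixes the order on $\mathbf N_0^p$ to be the \emph{lexicographic} order, not the componentwise (dominance) order you use when you say the inequality is ``read componentwise with at least one strict inequality.'' Under the lexicographic order a tuple $(j_1(r),\ldots ,j_p(r))>(i_1,\ldots ,i_p)$ can have much smaller total degree, e.g. $(1,0,\ldots ,0)>(0,i_2,\ldots ,i_p)$ for arbitrary $i_2,\ldots ,i_p$. Hence the key step of your graded-ring argument --- that every summand $\phi_1^{j_1(r)}\cdots\phi_p^{j_p(r)}q_r$ lies in $I^{|i|+1}$, so that the class of $\phi_1^{i_1}\cdots\phi_p^{i_p}q$ dies in $I^{|i|}/I^{|i|+1}\subset (A/I)[T_1,\ldots ,T_p]$ --- is unjustified, and the argument as written proves only the weaker dominance-order statement. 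The distinction matters for the paper: componentwise-strict implies lex-greater but not conversely, and it is the lexicographic version that is invoked later, in the filtration $W^I_{(i)}$ inside the proof of Theorem \ref{thm3.2}, where right-hand terms with $(j)>(i)$ of smaller total degree genuinely occur.

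Your fallback double induction, however, does prove the lemma as stated, and it nowhere actually needs componentwise-ness (your closing remark that the inequality is ``genuinely componentwise'' is wrong but harmless to that argument). Indeed, if $i_1\ge 1$ then the lexicographic hypothesis already forces $j_1(r)\ge i_1\ge 1$ for every $r$, so cancelling the non-zero-divisor $\phi_1$ preserves the shape of the equation and lowers $i_1$; and when $i_1=\cdots =i_{l_0-1}=0$, $i_{l_0}\ge 1$, reduction modulo $(\phi_1,\ldots ,\phi_{l_0-1})$ kills exactly the terms with some $j_l(r)>0$, $l<l_0$, while for the surviving terms the truncated tuples $(j_{l_0}(r),\ldots ,j_p(r))$ and $(i_{l_0},\ldots ,i_p)$ still compare lexicographically; since $\{\bar\phi_{l_0},\ldots ,\bar\phi_p\}$ is a regular sequence in $A/(\phi_1,\ldots ,\phi_{l_0-1})$, induction on the length of the sequence gives $\bar q\in(\bar\phi_{l_0},\ldots ,\bar\phi_p)$ and hence $q\in I$. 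So you should discard (or substantially repair) the associated-graded route and promote the elementary induction to be the proof.
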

Here, as throughout this paper, we use the lexicographic order on $\mathbf N_0^p$. Namely, 
$$(j_1,\ldots,j_p)>(i_1,\ldots,i_p)$$
iff for some $s\in\{1,\ldots ,p\}$ we have $j_l=i_l$ for $l<s$ and $j_s>i_s$.

\begin{lemma}[{see \cite[Lemma 3.2]{DT}}]\label{2.6}
Let $\{Q_i\}_{i=1}^q\ (q\ge n+1)$ be a set of homogeneous polynomials of common degree $d\ge 1$ in $\mathcal R^0_\Lambda[x_0,\ldots ,x_n]$ in weakly general position. Then for any pairwise different $1\le j_0,\ldots ,j_n\le q$, the sequence $\{Q_{j_0},\ldots,Q_{j_n}\}$ of elements in $\mathcal K_{\{Q_i\}}[x_0,\ldots,x_n]$ is a regular sequence, as well as all its subsequences.
\end{lemma}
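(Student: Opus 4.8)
The statement to prove is Lemma \ref{2.6}: for a family $\{Q_i\}_{i=1}^q$ of homogeneous polynomials of common degree $d\ge 1$ in $\mathcal R^0_\Lambda[x_0,\ldots,x_n]$ in weakly general position (i.e. weakly $n$-subgeneral position), any subset $\{Q_{j_0},\ldots,Q_{j_n}\}$ of $n+1$ of them forms a regular sequence in $\mathcal K_{\{Q_i\}}[x_0,\ldots,x_n]$, as do all its subsequences. Since a subsequence of length $m+1\le n+1$ of a family in weakly general position is again in weakly general position (the condition on $n+1$-fold intersections being empty implies the corresponding condition vacuously, and more importantly we only ever need that $m+1$ of them have no common zero when $m+1\ge n+1$; for shorter subsequences regularity is automatic from dimension count), it suffices to prove regularity of the full sequence $Q_{j_0},\ldots,Q_{j_n}$, and the ``as well as all its subsequences'' clause follows by the same argument applied to the relevant sub-collections. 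So after relabeling I may assume we are looking at $Q_0,\ldots,Q_n$, and I want to show that for each $i$, $Q_i$ is not a zero divisor in $R:=\mathcal K_{\{Q_j\}}[x_0,\ldots,x_n]/(Q_0,\ldots,Q_{i-1})$.

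The plan is to pass to a coherent subset $A\subset\Lambda$ (using Lemma \ref{lem1.3}) on which the coefficients of all the $Q_j$ behave consistently, so that $\mathcal K_{\{Q_j\}}=\mathcal R_{A,\{Q_j\}}$ is an honest field $K$, and on which the weakly-general-position hypothesis holds for every $\alpha\in A$, i.e. $Q_0(\alpha),\ldots,Q_n(\alpha)$ have no common nonzero zero in $\overline{k}^{n+1}$ for each such $\alpha$. First I would reduce to the specialization at a single well-chosen $\alpha\in A$: the point is that being a regular sequence is detected by the non-vanishing of the relevant resultant-type polynomial in the coefficients $a_{j,I}$. Concretely, $Q_0,\ldots,Q_n$ having no common zero in $\overline{K}^{n+1}\setminus\{0\}$ is equivalent to the vanishing of the Macaulay/multivariate resultant $\mathrm{Res}(Q_0,\ldots,Q_n)$ being $\ne 0$ as an element of $K$; and since the resultant is a universal polynomial in the coefficients, coherence of $A$ guarantees it vanishes either for all $\alpha\in A$ or for only finitely many. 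Because for each individual $\alpha$ the system has only the trivial solution (weakly general position on $A$), the resultant is nonzero at infinitely many $\alpha$, hence it is nonzero in $K$. Thus $Q_0,\ldots,Q_n$ have empty common zero locus in $\mathbf P^n_{\overline K}$.

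Given that, the algebra is standard. Over the field $K$, in the polynomial ring $S:=K[x_0,\ldots,x_n]$ (a Cohen--Macaulay ring of dimension $n+1$), a sequence of $n+1$ homogeneous elements whose common zero locus in $\mathbf P^n_{\overline K}$ is empty is a system of parameters for the irrelevant ideal, hence — because $S$ is Cohen--Macaulay — a regular sequence. I would spell this out: the quotient $S/(Q_0,\ldots,Q_n)$ has Krull dimension $0$ (its only prime is the irrelevant ideal, since $V(Q_0,\ldots,Q_n)=\{0\}$ in $\mathbf A^{n+1}$), so $Q_0,\ldots,Q_n$ is a homogeneous system of parameters; in a Cohen--Macaulay local/graded ring every system of parameters is a regular sequence (Matsumura, or Bruns--Herzog). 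This gives regularity of $Q_0,\ldots,Q_n$ in $S$; to get it in $\mathcal K_{\{Q_i\}}[x_0,\ldots,x_n]$ note that this is literally $S$ once we identify $\mathcal K_{\{Q_i\}}=K$ (independence of the coherent subset, noted after Definition \ref{def1.4}). For subsequences of length $m+1<n+1$: any such $Q_{j_0},\ldots,Q_{j_m}$ still has the property that its zero locus in $\mathbf A^{n+1}$ has codimension $m+1$ — indeed it contains $\{0\}$ with the right codimension because adding the remaining $Q$'s cuts it down to $\{0\}$, and each further polynomial drops dimension by at most one — so again it is part of a system of parameters, hence a regular sequence in the Cohen--Macaulay ring $S$.

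The main obstacle I anticipate is the passage from ``for every fixed $\alpha\in A$ the specialized system has no nontrivial common zero'' to ``the generic system over the field $K$ has no nontrivial common zero.'' This is exactly where coherence is essential, and the clean way to package it is through the resultant: I need to cite (or prove) that the multivariate resultant $\mathrm{Res}(Q_0,\ldots,Q_n)$ is a polynomial with integer coefficients in the $a_{j,I}$ that vanishes precisely when there is a common projective zero over an algebraically closed field, so that it is an element of $\mathcal R^0_A$, and then coherence forces the dichotomy ``identically zero on $A$ or zero at only finitely many $\alpha$.'' A secondary subtlety is making sure the ground field for the specialized statement (we need no common zero over $\overline k$) and for the generic statement (no common zero over $\overline K$) are compatible; this is handled automatically by the resultant formalism since $\mathrm{Res}\ne 0$ in $K$ already forces emptiness over any field extension, in particular over $\overline K$. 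Everything else — the Cohen--Macaulay system-of-parameters argument, and the reduction to subsequences — is routine commutative algebra.
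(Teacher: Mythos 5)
The paper gives no proof of Lemma \ref{2.6} --- it is quoted verbatim from \cite[Lemma 3.2]{DT} --- but your argument is correct and is essentially the proof given in that source. Using coherence together with the multivariate (Macaulay) resultant to upgrade ``no common nontrivial zero at each specialization $\alpha$'' to ``no common nontrivial zero over the coefficient field $\mathcal K_{\{Q_i\}}$,'' and then invoking the Cohen--Macaulay property of $\mathcal K_{\{Q_i\}}[x_0,\ldots,x_n]$ so that the $Q_{j_i}$ (and any subfamily, by the dimension count you indicate) form a homogeneous system of parameters and hence a regular sequence, is exactly the standard route.
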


Let $x$ be a map from an index set $\Lambda$ into $\P^n(k)$. A map $(C,a)\in\mathcal R^0_\Lambda$ is said to be small with respect to $x$ if
$$h(a(\alpha)) = o(h(x(\alpha))),$$
which means that, for every $\epsilon>0$, there exists a subset $C'\subset C$ with finite complement such that $h(a(\alpha))\le \epsilon h(x(\alpha))$ for all $\alpha\in C'$. Denote by $\mathcal K_x$ the set of all small maps. Then, $\mathcal K_x$ is a subring of $\mathcal R^0_\Lambda$.

We denote by $\mathcal C_x$ the set of all positive functions $f$ defined over $\Lambda$ outside a finite subset of $\Lambda$ such that
$$\log^+(f(\alpha)) = o(h(x(\alpha))).$$
Then, $\mathcal C_x$ is a ring. We see that if  $(C,a)\in\mathcal K_x$ then for every $v\in M_k$, the function $||a(\alpha)||_v\in\mathcal C_x$. Furthermore, if $(C,a)$ satisfies that $a(\alpha)\ne 0$ for all $\alpha\in C$ outside a finite subset then the function $\frac{1}{||a(\alpha)||_v}$ also belongs to $\mathcal C_x$

The following lemma is from \cite{LG}.
\begin{lemma}[{see \cite[Lemma 2.2]{LG}}]\label{lem2.3}
Let $\{Q_i\}_{i=0}^n$ be a set of homogeneous polynomials of common degree $d$ in $\mathcal R^0_\Lambda[x_0,...,x_n]$ in general position. Let $A$ be coherent with respect to $\{Q_i\}_{i=0}^n$. Assume that all coefficients of $Q_i,\ i\in\{0,...,n\}$ belong to the field $\mathcal R_{A,\{Q_i\}_{i=0}^n}$. Then, for every $v\in S$, there exist functions $l_{1,v}, l_{2,v}$ such that
$$l_{2,v}(\alpha)||x(\alpha)|^d_v\le\max_{0\le i\le n}||Q_i(\alpha)||_v\le l_{1,v}||x(\alpha)||^d_v,$$
for all $\alpha\in A$ outside a finite subset of $A$. Moreover, if the coefficients of $Q_i,\ i=0,...,n$ belong to $\mathcal K_x$ then $l_{1,v},l_{2,v}\in\mathcal C_x$.
\end{lemma}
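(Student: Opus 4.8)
The plan is to treat the two inequalities separately: the upper bound is a direct consequence of the triangle inequality, while the lower bound is a ``uniform Nullstellensatz'' estimate whose only delicate point is the passage from the pointwise general-position hypothesis to a single algebraic identity with coefficients in the field $\mathcal R_{A,\{Q_i\}_{i=0}^n}$, valid for all $\alpha\in A$ off a finite subset; it is here that coherence of $A$ is used. For the \emph{upper bound}, recall that for a homogeneous form $Q$ of degree $e$ in $k[x_0,\dots,x_n]$ and $y\in k^{n+1}$ one has $\|Q(y)\|_v\le c_{v,e}\|Q\|_v\|y\|_v^e$ by Gauss' lemma, where $c_{v,e}=1$ for $v$ non-archimedean and $c_{v,e}$ is the $n_v$-th power of the number of monomials of degree $e$ for $v$ archimedean. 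Applying this with $Q=Q_i(\alpha)$, $e=d$, $y=x(\alpha)$ and maximizing over $i$ gives the right-hand inequality with $l_{1,v}(\alpha):=c_{v,d}\max_{0\le i\le n}\|Q_i(\alpha)\|_v$; each $\|Q_i(\alpha)\|_v=\max_I\|a_{i,I}(\alpha)\|_v$ is positive off a finite subset of $A$ by coherence, and lies in $\mathcal C_x$ when the coefficients of the $Q_i$ are in $\mathcal K_x$, so $l_{1,v}$ has the required properties.

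For the \emph{lower bound}, write $R:=\mathcal R^0_{A,\{Q_i\}_{i=0}^n}$ and $K:=\mathcal R_{A,\{Q_i\}_{i=0}^n}$. The first step is to record the key consequence of coherence: every non-zero $\rho\in R$ satisfies $\rho(\alpha)\ne0$ for all $\alpha\in A$ outside a finite subset. Indeed, $\rho$ is a $k$-polynomial in the generators $a_{i,J}/a_{i,I}$; multiplying by a product $D(\alpha)$ of non-vanishing coefficients $a_{i,I}(\alpha)$ (non-zero off a finite set) gives $D(\alpha)\rho(\alpha)=P(\dots,a_{i,J}(\alpha),\dots)$ for a polynomial $P$ homogeneous in each block $(a_{i,J})_J$, and by Definition~\ref{def1.2} this either vanishes for all $\alpha\in A$ or for only finitely many; the first case would force $\rho=0$ in $R$, so the second holds. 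The second step is to show that $Q_0,\dots,Q_n\in K[x_0,\dots,x_n]$ have no common zero in $\overline k^{\,n+1}\setminus\{0\}$: the Macaulay resultant $\mathrm{Res}(Q_0,\dots,Q_n)$ is a polynomial in the coefficients, homogeneous in each $Q_i$ separately; by general position the specialization $\mathrm{Res}(Q_0(\alpha),\dots,Q_n(\alpha))$ is non-zero for all $\alpha\in A$ off a finite subset, and after normalizing each $Q_i$ by a fixed non-vanishing coefficient this value differs from the value at $\alpha$ of the element $\mathrm{Res}(Q_0,\dots,Q_n)\in K$ by a non-zero factor, so $\mathrm{Res}(Q_0,\dots,Q_n)\ne0$ in $K$ and the $Q_i$ have no common zero over $\overline K$.

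By Hilbert's Nullstellensatz there is an integer $m$ (necessarily $m\ge d$) with $x_j^m\in(Q_0,\dots,Q_n)$ in $K[x_0,\dots,x_n]$ for every $j$; taking homogeneous components, $x_j^m=\sum_{i=0}^n b_{ij}Q_i$ with $b_{ij}\in K[x_0,\dots,x_n]$ homogeneous of degree $m-d$, and clearing denominators yields $c\in R\setminus\{0\}$ and $\widetilde b_{ij}\in R[x_0,\dots,x_n]$ homogeneous of degree $m-d$ with $c\,x_j^m=\sum_{i=0}^n\widetilde b_{ij}Q_i$ in $R[x_0,\dots,x_n]$ for all $j$. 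Substituting $x=x(\alpha)$ for $\alpha$ outside the finite subsets where $c$, some $\widetilde b_{ij}$-coefficient or some needed $a_{i,I}$ vanishes, applying the triangle inequality and Gauss' lemma to the forms $\widetilde b_{ij}(\alpha)$ of degree $m-d$, and maximizing over $j$ gives
$$\|c(\alpha)\|_v\,\|x(\alpha)\|_v^{m}\le (n+1)\,c_{v,m-d}\Big(\max_{i,j}\|\widetilde b_{ij}(\alpha)\|_v\Big)\|x(\alpha)\|_v^{m-d}\max_{0\le i\le n}\|Q_i(\alpha)(x(\alpha))\|_v .$$
Dividing by $\|x(\alpha)\|_v^{m-d}$ (here $x(\alpha)\ne0$) produces the left-hand inequality with $l_{2,v}(\alpha):=\|c(\alpha)\|_v\big/\big((n+1)\,c_{v,m-d}\max_{i,j}\|\widetilde b_{ij}(\alpha)\|_v\big)$, a positive function off a finite subset of $A$ since $c(\alpha)\ne0$ and $\max_{i,j}\|\widetilde b_{ij}(\alpha)\|_v>0$ there (some $\widetilde b_{ij}$ is a non-zero element of $R[x_0,\dots,x_n]$). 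For the final clause: if all coefficients of the $Q_i$ lie in $\mathcal K_x$, then since $\mathcal K_x$ is a ring containing every ratio $a_{i,J}/a_{i,I}$ we get $R\subseteq\mathcal K_x$, so $\|c(\alpha)\|_v$ and all $\|\widetilde b_{ij}(\alpha)\|_v$ lie in $\mathcal C_x$, and because $c(\alpha)\ne0$ off a finite subset also $1/\|c(\alpha)\|_v\in\mathcal C_x$; hence $l_{2,v}\in\mathcal C_x$. The main obstacle, and the step deserving the most care, is precisely the algebraic input of the second paragraph — the transfer of the pointwise hypothesis into the $\alpha$-independent identity $c\,x_j^m=\sum_i\widetilde b_{ij}Q_i$ — which relies on coherence of $A$ and on the vanishing property of non-zero elements of $R$; everything else is Gauss' lemma.
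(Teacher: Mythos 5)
The paper itself gives no proof of this lemma---it is quoted from \cite[Lemma 2.2]{LG}---but your argument (upper bound by Gauss's lemma; lower bound by using the resultant plus coherence to show $Q_0,\dots,Q_n$ have no common nontrivial zero over the algebraic closure of $\mathcal R_{A,\{Q_i\}_{i=0}^n}$, then the Nullstellensatz identity $c\,x_j^m=\sum_i\widetilde b_{ij}Q_i$ with denominators cleared into $\mathcal R^0_{A,\{Q_i\}_{i=0}^n}$ and evaluated off a finite subset) is precisely the standard proof given in that reference, and it is correct, including the use of coherence to guarantee that the nonzero elements $c$ and the coefficients of $\widetilde b_{ij}$ vanish at only finitely many $\alpha\in A$. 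You have also correctly read the displayed inequality as bounding $\max_{0\le i\le n}\|Q_i(\alpha)(x(\alpha))\|_v$ rather than the typographically garbled $\max_{0\le i\le n}\|Q_i(\alpha)\|_v$ appearing in the statement.
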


To prove our main theorem, we need the following result of Ru and Vojta.

\begin{theorem}[{Schmidt's subspace theorem for moving hyperplane targets \cite[Theorem 1.1]{RV})}]\label{thm2.4}
Let $k$ be a number field, $S$ be a finite set of places of $k$, and let $\epsilon >0$. Let $\Lambda$ be an infinite index set. For every $v\in  S$, let $L^{(v)}_1,\ldots, L^{(v)}_q$ be moving hyperplanes $\Lambda\rightarrow \P^n(k)^*$ and let $x:\Lambda\rightarrow\P^n(k)$ be a collection of points such that we have the following:

$\mathrm{(1)}$ For every $v\in  S,\alpha\in\Lambda, L^{(v)}_1(\alpha),\ldots, L^{(v)}_q(\alpha)$ are in general position.

$\mathrm{(2)}$ For every $v\in  S$, $x$ is non-degenerate over $\mathcal R$ with respect to $L^{(v)}_1,\ldots, L^{(v)}_q$.

$\mathrm{(3)}$ For every $v\in  S, h(L^{(v)}_j(\alpha)) = o(h(x(\alpha))), j=1,\ldots, q.$

\noindent
Then, there exists an infinite index subset $A\subset\Lambda$ such that
$$\sum_{v\in S}\sum_{j=1}^q\lambda_{L_j^{(v)},v}(x(\alpha))\le (n+1+\epsilon)h(x(\alpha)),$$
for all $\alpha\in  A$.
\end{theorem}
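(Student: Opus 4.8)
The plan is to reduce the moving-hypersurface statement in weakly $N$-subgeneral position to the moving-hyperplane Theorem~\ref{thm2.4} via a Veronese-type embedding combined with the ``filtration'' technique of Corvaja--Zannier and Dethloff--Tan. First I would fix $v\in S$ and, at each such $v$, select among $Q_1,\dots,Q_q$ a subfamily $Q_{j_0},\dots,Q_{j_N}$ that is ``closest'' to $x(\alpha)$ at $v$, in the sense that $\lambda_{Q_{j_i}(\alpha),v}(x(\alpha))$ is largest; by the weakly $N$-subgeneral position hypothesis, reindexing so these become $Q_1,\dots,Q_N$ (for $\alpha$ in a coherent infinite subset $A$), a standard combinatorial argument bounds $\sum_{j=1}^q \tfrac1{d_j}\lambda_{Q_j(\alpha),v}(x(\alpha))$ above by $\sum_{i=0}^{N} c\cdot\tfrac1{d_{j_i}}\lambda_{Q_{j_i}(\alpha),v}(x(\alpha))$ up to terms controlled by the height of the coefficients, which is $o(h(x(\alpha)))$ by hypothesis~(3). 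This is the point where the factor $(N-n+1)$ eventually enters: from $N+1$ hypersurfaces meeting only at the origin, one must pass to a generic linear combination so that some $n+1$ of them are in \emph{general} position, and doing so for the weakly subgeneral case costs the extra multiplicity.

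Next, after replacing the $d_i$ by a common degree $d$ (raising each $Q_i$ to a suitable power, noting $\lambda_{Q_i^{m}}=m\lambda_{Q_i}$), I would work with the set $V_d$ of degree-$d$ monomials $x^I$, $I\in\mathcal T_d$, which has cardinality, say, $M+1:=\binom{n+d}{d}$. The map $\phi_d:\P^n(k)\to\P^M(k)$, $x\mapsto (x^I)_{I\in\mathcal T_d}$, sends $x(\alpha)$ to a point $y(\alpha)\in\P^M(k)$, and each hypersurface $Q_i(\alpha)=\sum_I a_{i,I}(\alpha)x^I$ becomes a moving hyperplane $L_i(\alpha)$ in $\P^M(k)$, with $\lambda_{Q_i(\alpha),v}(x(\alpha))=\lambda_{L_i(\alpha),v}(y(\alpha))$ and $h(y(\alpha))=d\,h(x(\alpha))+O(1)$. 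To make the chosen subfamily $Q_{j_0},\dots,Q_{j_N}$ (now in general position on a linear subspace after taking generic combinations, by Lemma~\ref{2.6}) interact well, I would, following Dethloff--Tan and Lemma~\ref{2.5}, build a filtration of the polynomial ring by the ideal generated by $Q_{j_0},\dots,Q_{j_n}$: on each graded piece the images of $y(\alpha)$ under an explicit basis yield new moving hyperplanes $L_\ell^{(v)}$ in a projective space of dimension equal to the relevant Hilbert function value, to which Theorem~\ref{thm2.4} applies. The algebraic nondegeneracy of $x$ over $\mathcal R_{\{Q_j\}}$ (hypothesis~(2)) must be transferred to linear nondegeneracy of the new point over $\mathcal R$ with respect to the $L_\ell^{(v)}$ — this is exactly what the ``coherent subset'' and quotient-field formalism of Definitions~\ref{def1.2}--\ref{def1.5} is set up to guarantee, and I would verify it on a coherent $A$.

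The arithmetic heart is then: applying Theorem~\ref{thm2.4} to $(y(\alpha); L_\ell^{(v)})$ for each $v\in S$, summing the resulting inequalities over the filtration levels and over $v\in S$, and using Lemma~\ref{lem2.3} to bound the ``error'' terms $\max_\ell\|L_\ell^{(v)}(\alpha)\|_v$ from above and below by functions in $\mathcal C_x$, so that all such contributions are absorbed into $o(h(x(\alpha)))$. Carefully tracking the combinatorics of the Hilbert function — summing $\sum_{u}\dim(\text{degree-}u\text{ piece})$ against the exponents in the filtration — produces the main term; dividing through by the appropriate normalizing factor and re-expressing in terms of $h(x(\alpha))$ gives the coefficient $(N-n+1)(n+1)+\epsilon$, after choosing the auxiliary degree $d$ and the generic combinations so the leftover is $<\epsilon$. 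The main obstacle I anticipate is precisely the bookkeeping in this last step: one must coordinate three nested approximations — the genericity parameter used to pass from weakly subgeneral to general position, the auxiliary degree $d$ governing the Veronese/filtration construction, and the $\epsilon$ in Theorem~\ref{thm2.4} — and show the error term genuinely stays $o(h(x(\alpha)))$ uniformly in $\alpha$ as one restricts to smaller and smaller coherent subsets $A$, while making the combinatorial limit of the Hilbert-function sum land exactly on $(N-n+1)(n+1)$ rather than something larger.
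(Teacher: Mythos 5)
There is a fundamental mismatch here: what you have written is not a proof of Theorem \ref{thm2.4} at all, but a sketch of how to \emph{deduce} the paper's Main Theorem (equivalently Theorem \ref{thm3.2}) \emph{from} Theorem \ref{thm2.4}. The statement you were asked to prove is the Schmidt subspace theorem for moving \emph{hyperplane} targets; your proposal explicitly treats that statement as a black box (``\ldots to which Theorem \ref{thm2.4} applies'', ``applying Theorem \ref{thm2.4} to $(y(\alpha); L^{(v)}_\ell)$\ldots''), which is circular with respect to the task. The Veronese embedding, the Corvaja--Zannier/Dethloff--Tan filtration, the passage from weakly $N$-subgeneral to general position via generic linear combinations, and the Hilbert-function bookkeeping that produces the factor $(N-n+1)(n+1)$ are all ingredients of the reduction of hypersurfaces to hyperplanes; none of them is relevant to establishing the hyperplane theorem itself.

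In the paper, Theorem \ref{thm2.4} is not proved but quoted from Ru--Vojta \cite[Theorem 1.1]{RV}, with the remark that although Ru and Vojta state their result for a fixed set of hyperplanes depending on $v$, their proof goes through verbatim when the hyperplanes are allowed to move (i.e.\ depend on $\alpha\in\Lambda$) subject to the coherence, nondegeneracy, and smallness hypotheses (1)--(3). A genuine proof would have to reproduce the Ru--Vojta argument: approximation of the moving coefficients, reduction to the classical (fixed-target) Schmidt subspace theorem, and the combinatorial selection of infinite coherent subsets of $\Lambda$ on which the relevant determinants and Weil functions behave uniformly. If your intent was to prove the Main Theorem, the outline you give is broadly consistent with the paper's Section 3 (Lemma \ref{lem3.1} plus the filtration argument in Theorem \ref{thm3.2}), but as a proof of the stated Theorem \ref{thm2.4} it contains no content.
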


We note that M. Ru and P. Vojta only stated their theorem in which the set of hyperplanes is fixed but their proof also valid for the case of moving hyperplanes as above.

\section{Schmidt's subspace theorem for moving hypersurfaces}

We first prove the following lemma.

\begin{lemma}\label{lem3.1}
Let $Q_1,...,Q_{N+1}$ homogeneous polynomials in $\mathcal R^0_\Lambda[x_0,...,x_n]$ of the same degree $d\ge 1$, in weakly $N$-subgeneral position. Then for an infinite subset $A\subset\Lambda$ which is coherent with respect to $\{Q_1,...,Q_{N+1}\}$, there exist $n$ homogeneous polynomials $P_{2},...,P_{n+1}$ in $\mathcal R^0_\Lambda[x_0,...,x_n]$ of the form
$$P_t=\sum_{j=2}^{N-n+t}c_{tj}Q_j,\ c_{tj}\in k,\ t=2,...,n+1,$$
such that $\{P_1,...,P_{n+1}\}$ are in weakly general position in $\mathcal R^0_A[x_0,...,x_n]$, where $P_1=Q_1$.
\end{lemma}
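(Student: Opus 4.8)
The plan is to construct the polynomials $P_2,\dots,P_{n+1}$ one at a time by a greedy/inductive procedure, at each stage choosing generic coefficients so that the newly built polynomial avoids all the ``bad'' common zero loci that would violate weak general position, while respecting the constraint that $P_t$ only involves $Q_2,\dots,Q_{N-n+t}$. Set $P_1=Q_1$. Suppose $P_1,\dots,P_{t-1}$ have already been chosen so that every subcollection of size $\le n$ among $\{P_1,\dots,P_{t-1}\}$ together with appropriately many of the $Q_j$'s still has only the trivial common zero after specializing at each $\alpha\in A$ (the precise induction hypothesis has to be formulated so that weak $N$-subgeneral position of $\{Q_1,\dots,Q_{N+1}\}$ is exactly what feeds it). To pick $P_t=\sum_{j=2}^{N-n+t}c_{tj}Q_j$, I would work over the algebraic closure $\overline{k}$ and over a fixed $\alpha\in A$: the locus of coefficient vectors $(c_{tj})$ for which $\{P_1,\dots,P_{t-1},P_t\}$ fails to be in (weak) general position, intersected with enough of the remaining $Q_j$'s, is a proper Zariski-closed subset of the parameter space $\mathbb{A}^{N-n+t-1}$, because weak $N$-subgeneral position guarantees that for \emph{generic} linear combinations the dimension of the common zero set drops by one with each added polynomial — this is the standard ``general hyperplane section'' / Bertini-type counting. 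Since $k$ is infinite (it is a number field), I can choose $(c_{tj})\in k$ avoiding this proper closed subset.

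The key combinatorial point that makes the index bound $j\le N-n+t$ work: starting from $P_1=Q_1$ and the $N+1$ hypersurfaces, each new $P_t$ needs access to enough fresh $Q_j$'s so that a generic combination cuts down the dimension; tracking the indices shows that at step $t$ we have already ``used up'' the need to separate from $t-1$ previously chosen polynomials, and the pigeonhole built into weak $N$-subgeneral position (any $N+1$ of the $Q_j$ have only trivial common zero) leaves exactly the range $j\in\{2,\dots,N-n+t\}$ available and sufficient. The reason a single choice of $(c_{tj})$ works simultaneously for all $\alpha\in A$ — rather than a different choice per $\alpha$ — is coherence: the condition ``$\{P_1,\dots,P_t,Q_{j_1},\dots\}$ has a nontrivial common zero at $\alpha$'' is governed by the vanishing of certain resultants/eliminants, which are polynomials in the coefficients $a_{i,I}(\alpha)$ and the $c_{tj}$; by Definition~\ref{def1.2} such a polynomial either vanishes for all $\alpha\in A$ or for only finitely many, and the latter can be absorbed into ``outside a finite subset of $A$,'' while the former is ruled out for generic $(c_{tj})$ by the dimension count above.

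Concretely, I would carry out the steps in this order: (i) fix the coherent $A$ and pass to the field $\mathcal{R}_{A,\{Q_i\}}$, so all coefficients live in a field and the polynomials can be manipulated generically; (ii) formulate the induction hypothesis precisely as ``for every $0\le s\le n$ and every choice of $P_{i_1},\dots,P_{i_s}$ among the already-constructed ones together with any $Q_{j}$'s whose indices lie in the allowed range, the common zero locus over $\overline{\mathcal R}$ is trivial''; (iii) at the inductive step, view the failure locus in the affine space of coefficient vectors, invoke weak $N$-subgeneral position of the original family to see it is proper closed, and use infinitude of $k$ to select $(c_{tj})\in k$; (iv) re-express the chosen condition back in terms of $\alpha$ via resultants and invoke coherence to conclude it holds for all but finitely many $\alpha\in A$, then shrink $A$ (which stays coherent). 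The main obstacle I expect is step (ii)–(iii): getting the bookkeeping of which $Q_j$'s are available at stage $t$ to line up exactly with the bound $N-n+t$, and verifying that the relevant failure locus is genuinely a \emph{proper} subvariety — this is where one must use that any $N+1$ of the $Q_j$ have trivial common zero, translated through repeated generic hyperplane sections, each of which drops the dimension of the zero set by one only when the ambient configuration is in weak $N$-subgeneral position.
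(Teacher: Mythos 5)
Your overall strategy coincides with the paper's: set $P_1=Q_1$, build $P_t=\sum_{j=2}^{N-n+t}c_{tj}Q_j$ inductively with coefficients chosen generically in the infinite field $k$ so that the dimension of the common zero locus drops by one at each stage, and use coherence of $A$ to pass from one parameter value to all but finitely many $\alpha$. However, the step you yourself flag as the main unresolved obstacle --- that the failure locus in coefficient space is proper, and that the index range $j\le N-n+t$ suffices --- is precisely the content of the lemma, and it cannot be left at the level of a Bertini-type heuristic; no elaborate induction hypothesis about subfamilies of size $\le n$ is needed either. The paper closes it as follows. Fix a single $\alpha_0\in A$ (after shrinking $A$ so that the subgeneral position condition holds on all of $A$). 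Since the intersection of all $N+1$ hypersurfaces $Q_i^*(\alpha_0)$ is empty and adjoining one hypersurface lowers dimension by at most one, one gets $\dim\bigcap_{i=1}^{m}Q_i^*(\alpha_0)\le N-m$ for $m=N-n+2,\dots,N+1$. At stage $t$, assuming $\dim\bigcap_{j=1}^{t-1}P_j^*(\alpha_0)\le n-t+1$, consider each irreducible component $\Gamma$ of dimension $n-t+1$ of that intersection: the set $V_\Gamma$ of vectors $c=(c_2,\dots,c_{N-n+t})\in k^{N-n+t-1}$ with $\Gamma\subset\bigl(\sum_{j=2}^{N-n+t}c_jQ_j\bigr)^*(\alpha_0)$ is a linear subspace, and it is proper, because otherwise $\Gamma\subset Q_j^*(\alpha_0)$ for all $j=2,\dots,N-n+t$, and since also $\Gamma\subset P_1^*(\alpha_0)=Q_1^*(\alpha_0)$ this would put $\Gamma$ inside $\bigcap_{i=1}^{N-n+t}Q_i^*(\alpha_0)$, whose dimension is at most $n-t<\dim\Gamma$. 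There are only finitely many such components, so a coefficient vector outside $\bigcup_\Gamma V_\Gamma$ exists, and with it $\dim\bigcap_{j=1}^{t}P_j^*(\alpha_0)\le n-t$. This finite-union-of-proper-linear-subspaces argument is exactly the bookkeeping that makes the bound $N-n+t$ both available and sufficient.

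A second divergence: your step (iv) invokes coherence at every inductive stage, which would require expressing conditions of the form ``$t\le n$ forms have common zero locus of dimension $\le n-t$'' as polynomial conditions in $\alpha$ (Chow forms or eliminants) --- workable but needlessly delicate. In the paper the intermediate dimension conditions are used only at the single point $\alpha_0$: after the $n$-th stage one has $\bigcap_{j=1}^{n+1}P_j^*(\alpha_0)=\emptyset$, so the classical resultant $R$ of $P_1,\dots,P_{n+1}$ (a polynomial in the coefficients of $Q_1,\dots,Q_{N+1}$, the $c_{tj}$ being fixed elements of $k$, homogeneous in each block of coefficients) satisfies $R(\alpha_0)\ne 0$, and coherence is applied exactly once to conclude $R(\alpha)\ne 0$ for all $\alpha\in A$ outside a finite set, i.e.\ weak general position of $\{P_1,\dots,P_{n+1}\}$ over $A$. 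So your plan is repairable and follows the paper's route, but as written the decisive properness/index verification is deferred rather than proved, and the coherence transfer should be concentrated in the single final resultant rather than spread over the induction.
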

\begin{proof} We assume that $Q_i\ (1\le i\le N+1)$ has the following form
$$ Q_i=\sum_{I\in\mathcal T_d}a_{i,I}x^I.$$
By the definition of weakly subgeneral position, there exists an infinite subset $A'$ of $A$ with finite complement such that, for all $\alpha\in A'$, the system of equations 
$$ Q_i(\alpha)(x_0,...,x_n)=0, 1\le i\le N+1, $$
has only the trivial solution $(0,...,0)$ in $\bar k$. We note that $A'$ is coherent with respect to $\{Q_i\}_{i=1}^{N+1}$. Replacing $A$ by $A'$ if necessary, we may assume that $A'=A$. 

We fix $\alpha_0\in A$. For each homogeneous polynomial $Q\in\ k[x_0,...,x_n]$, we will denote by $Q^*$ the hypersurface in $\P^n(\bar k)$ defined by $Q$, i.e.,
$$ Q^*=\{(x_0:\cdots :x_n)\in\P^n(\bar k)\ |\ Q(x_0,...,x_n)=0\}.$$
Set $P_1=Q_1$. It is easy to see that
$$\dim\left(\bigcap_{i=1}^tQ^*_i(\alpha_0)\right)\le N-t\text{ for } t=N-n+2,...,N+1,$$
where $\dim\emptyset =-\infty$.

The homogeneous polynomial $P_2$ is constructed as follows. For every irreducible component $\Gamma$ of dimension $n-1$ of $Q^*_1(\alpha_0)$, we put 
$$V_{1\Gamma}=\left\{c=(c_2,...,c_{N-n+2})\in k^{N-n+1}\ ;\ \Gamma\subset Q^*_c(\alpha_0),\text{ where }Q_c=\sum_{j=2}^{N-n+2}c_jQ_j\right\}.$$
Then $V_{1\Gamma}$ is a subspace of $k^{N-n+1}$. Since $\dim\left(\bigcap_{i=1}^{N-n+2}Q^*_i(\alpha_0)\right)\le n-2$, there exists $i\in\{2,...,N-n+2\}$ such that $\Gamma\not\subset Q^*_i(\alpha_0)$. This implies that $V_{1\Gamma}$ is a proper subspace of $k^{N-n+1}$. Since the set of irreducible components of dimension $n-1$ of $Q^*_1(\alpha_0)$ is finite, 
$$k^{N-n+1}\setminus\bigcup_{\Gamma}V_{1\Gamma}\ne\emptyset. $$
Hence, there exists $(c_{12},...,c_{1(N-n+2)})\in k^{N-n+1}$ such that
$$\Gamma\not\subset P^*_2(\alpha_0)$$
for all irreducible components of dimension $n-1$ of $Q^*_1(\alpha_0)$, where
$P_2=\sum_{j=2}^{N-n+2}c_{1j}Q_j.$
This clearly implies that $\dim\left(P_1^*(\alpha_0)\cap P_2^*(\alpha_0)\right)\le n-2.$

Similarly, for every irreducible component $\Gamma'$ of dimension $n-2$ of $\left(P_1^*(\alpha_0)\cap P_2^*(\alpha_0)\right)$, put 
$$V_{2\Gamma'}=\{c=(c_2,...,c_{N-n+3})\in k^{N-n+2}\ ;\ \Gamma\subset Q^*_c(\alpha_0),\text{ where }Q_c=\sum_{j=2}^{N-n+3}c_jQ_j\}.$$
Hence, $V_{2\Gamma'}$ is a subspace of $k^{N-n+2}$. Since $\dim\left(\bigcap_{i=1}^{N-n+3}Q_i^*(\alpha_0)\right)\le n-3$, there exists $i, (2\le i\le N-n+3)$ such that $\Gamma'\not\subset Q_i^*(z_0)$. This implies that $V_{2\Gamma'}$ is a proper subspace of $k^{N-n+2}$. Since the set of irreducible components of dimension $n-2$ of $\left(P_1^*(\alpha_0)\cap P_2^*(\alpha_0)\right)$ is finite, 
$$k^{N-n+2}\setminus\bigcup_{\Gamma'}V_{2\Gamma'}\ne\emptyset. $$
Then, there exists $(c_{22},...,c_{2(N-n+3)})\in k^{N-n+2}$ such that
$$\Gamma'\not\subset P_3^*(\alpha_0) $$
for all irreducible components of dimension $n-2$ of $P_1^*(\alpha_0)\cap P_2^*(\alpha_0)$, where
$P_3=\sum_{j=2}^{N-n+3}c_{2j}Q_j.$
It is clear that $\dim\left(P_1^*(\alpha_0)\cap P_2^*(\alpha_0)\cap P_3^*(\alpha_0)\right)\le n-3.$

Repeating again the above step, after the $n$-th step we get the hypersurfaces $P_2,...,P_{n+1}$ satisfying 
$$\dim\left(\bigcap_{j=1}^tP_j^*(\alpha_0)\right)\le n-t, \ 1\le t\le n+1. $$
In particular, $\bigcap_{j=1}^{n+1}P_j^*(\alpha_0)=\emptyset.$

Now we denote by $R$ the resultant of $P_1,...,P_{n+1}$. Then we see that
$$ R(\alpha_0)\ne 0. $$
This implies that $R(\alpha)\ne 0$ for all $\alpha\in A$ outside a finite set. Therefore, the system of equations
$$ P_i(\alpha)(x_0,...,x_n)=0,\ \ i=1,...,n+1, $$
has only the trivial solution $(0,...,0)$ in $\bar k^{n+1}$ for all $\alpha\in A$ outside a finite set. This means that $P_1,...,P_{n+1}$ are in weakly general position in $\mathcal R^0_A[x_0,...,x_n]$. We complete the proof of the lemma.
\end{proof}

We need the following lemma.
\begin{lemma}[{see \cite[Proposition 2.5]{LG} and also \cite{DT}}]\label{3.3}
Let $\{P_i\}_{i=1}^q\ (q\ge n+1)$ be a set of homogeneous polynomials of common degree $d\ge 1$ in $\mathcal R^0_\Lambda[x_0,\ldots ,x_n]$ in weakly general position. Then for any nonnegative integer $L$ and for any $J:=\{j_1,\ldots ,j_n\}\subset\{1,\ldots ,q\},$ the dimension of the vector space $\frac{V_L}{(P_{j_1},\ldots ,P_{j_n})\cap V_L}$ is equal to the number of $n$-tuples $(s_1,\ldots ,s_n)\in\mathbf N^n_0$ such that $s_1+\cdots +s_n\le L$ and $0\le s_1,\ldots,s_n\le d-1 $. In particular, for all $L\ge n(d-1)$, we have
$$\dim\frac{V_L}{(P_{j_1},\ldots ,P_{j_n})\cap V_L}=d^n. $$
\end{lemma}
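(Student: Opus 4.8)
The plan is to recognize the quotient in the statement as the degree-$L$ graded piece of a complete-intersection coordinate ring and to compute its dimension from a Hilbert series. Write $S=\mathcal K_{\{P_i\}}[x_0,\ldots,x_n]$ for the polynomial ring over the field of coefficients occurring in the statement (a field, as in Lemma~\ref{2.6}), graded by total degree, so that $V_L=S_L$, $\dim V_L=\binom{L+n}{n}$, and the Hilbert series is $H_S(t):=\sum_{L\ge0}(\dim V_L)\,t^L=(1-t)^{-(n+1)}$. Put $I=(P_{j_1},\ldots,P_{j_n})\subseteq S$; then the vector space $V_L/\big((P_{j_1},\ldots,P_{j_n})\cap V_L\big)$ is exactly $(S/I)_L$, whose dimension is the coefficient of $t^L$ in $H_{S/I}(t)$.

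First I would bring in the regularity input. Since $q\ge n+1$, the set $J=\{j_1,\ldots,j_n\}$ is contained in some $(n+1)$-element subset of $\{1,\ldots,q\}$, so by Lemma~\ref{2.6} the sequence $\{P_{j_1},\ldots,P_{j_n}\}$, together with every initial segment of it, is a regular sequence in $S$ (being a subsequence of a length-$(n+1)$ regular sequence). Thus $I$ is generated by a regular sequence of $n$ forms, each of degree $d$. For a homogeneous nonzerodivisor $P$ of degree $d$ on a graded $S$-module $M$ there is a short exact sequence $0\to M(-d)\xrightarrow{\,\cdot P\,}M\to M/PM\to0$, giving $H_{M/PM}(t)=(1-t^d)H_M(t)$. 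Applying this $n$ times, peeling $P_{j_n},P_{j_{n-1}},\ldots$ off in turn (valid because every initial segment of the sequence is again regular), I obtain
$$H_{S/I}(t)=(1-t^d)^nH_S(t)=\frac{(1-t^d)^n}{(1-t)^{n+1}}=\frac{(1+t+\cdots+t^{d-1})^n}{1-t}.$$

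It then remains to read off the coefficient of $t^L$. Expanding $(1+t+\cdots+t^{d-1})^n=\sum_{m\ge0}N_m\,t^m$, the integer $N_m$ counts the $n$-tuples $(s_1,\ldots,s_n)\in\mathbf N_0^n$ with $0\le s_i\le d-1$ and $s_1+\cdots+s_n=m$; dividing by $1-t$ turns coefficients into partial sums, so the coefficient of $t^L$ in $H_{S/I}(t)$ equals $\sum_{m\le L}N_m$, i.e.\ the number of $n$-tuples $(s_1,\ldots,s_n)$ with $0\le s_i\le d-1$ and $s_1+\cdots+s_n\le L$. That is the asserted dimension. For the final clause, once $L\ge n(d-1)$ every tuple with $0\le s_i\le d-1$ automatically has $\sum_i s_i\le n(d-1)\le L$, so the count is the full number $d^n$ of such tuples.

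I expect no real obstacle here: the algebra is routine once the regular-sequence property of $\{P_{j_1},\ldots,P_{j_n}\}$ is in hand, and that is precisely what Lemma~\ref{2.6} supplies (which in turn rests on weak general position). The only points requiring a little care are the graded bookkeeping — making sure one reads off the correct homogeneous component — and the justification that the regular sequence may be peeled off starting from its last term, which is fine because initial segments of a regular sequence are themselves regular.
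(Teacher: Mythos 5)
Your proof is correct. The paper itself gives no proof of Lemma \ref{3.3} (it is quoted from \cite[Proposition 2.5]{LG} and \cite{DT}), and your argument is exactly the standard one underlying those references: Lemma \ref{2.6} (applicable since $q\ge n+1$, so $J$ extends to an $(n+1)$-element subset) makes $P_{j_1},\ldots ,P_{j_n}$ a regular sequence, the exact sequences $0\to M(-d)\to M\to M/PM\to 0$ give $H_{S/I}(t)=(1-t^d)^n(1-t)^{-(n+1)}$, and reading off the $t^L$-coefficient yields the stated count, which stabilizes at $d^n$ for $L\ge n(d-1)$. The one point worth making explicit is that the dimension count must be taken over the coefficient field (the quotient field $\mathcal R_{A,\{Q_i\}_{i}}$, in which Lemma \ref{2.6} asserts regularity) rather than over the ring $\mathcal R^0_{A,\{Q_i\}_{i}}$ as the paper's wording in the proof of Theorem \ref{thm3.2} loosely suggests; your choice of base field is the correct reading.
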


In order to prove the main theorem, we first prove the following theorem, and then we will show that the main theorem is an implication of it.

\begin{theorem}\label{thm3.2}
Let $k$ be a number field, let $S$ be a finite set of places of $k$ and let $q\ge n+1$ be a positive integer and $\epsilon >0$. Further, let $\Lambda$ be an infinite index set, let $Q_1,...,Q_q$ be moving hypersurfaces in $\P^n(k)$ of the same degree $d$ and let $x: \Lambda\rightarrow\P^n(k)$ be a collection of points such that:

$\mathrm{(1)}$ the family of polynomials $Q_1,...,Q_q$ is in weakly $N$-subgeneral position, $N\ge n$,

$\mathrm{(2)}$ $x$ is algebraically nondegenerate over $\mathcal R_{\{Q_i\}_{i=1}^q}$,

$\mathrm{(3)}$ $h(Q_i(\alpha)) = o(h(x(\alpha)))$ for all $i=1,...,q$.

\noindent
Let $A\subset\Lambda$ be an infinite subset which is coherent with respect to $\{Q_i\}_{i=1}^q$. Suppose that:

$\mathrm{(4)}$ each $Q_i$ has coefficients in $\mathcal R_{A,\{Q_i\}_{i=1}^q}$ and has at least one coefficient equal to $1$,

$\mathrm{(5)}$ $Q_i(\alpha)(x(\alpha))\ne 0$ for all $\alpha\in A$, $i=1,...,q.$

\noindent
Then there exists an infinite index subset $B\subset A\subset\Lambda$ such that
$$\sum_{v\in S}\sum_{i=1}^q\frac{1}{d}\lambda_{Q_i(\alpha),v}(x(\alpha))\le ((N-n+1)(n+1)+\epsilon) h(x(\alpha)),$$
for all $\alpha\in B$.
\end{theorem}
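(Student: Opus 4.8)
The plan is to reduce Theorem~\ref{thm3.2} to the moving-hyperplane Schmidt subspace theorem (Theorem~\ref{thm2.4}) via a Veronese-type embedding, using Lemma~\ref{lem3.1} to replace the weakly $N$-subgeneral position hypothesis by a weakly general position one at the cost of the factor $(N-n+1)(n+1)$. First I would fix $v\in S$ and, for each $\alpha\in A$, localize: among $Q_1,\dots,Q_q$ choose, by a standard pigeonhole argument on $\lambda_{Q_i(\alpha),v}$, an ordered subfamily $Q_{j_0},\dots,Q_{j_N}$ realizing the $N+1$ largest values of the Weil function at $(v,\alpha)$ (renumbered so that $j_0<\cdots<j_N$), so that
$$\sum_{i=1}^q\lambda_{Q_i(\alpha),v}(x(\alpha))\le \sum_{i=0}^N\lambda_{Q_{j_i}(\alpha),v}(x(\alpha))+O(h(Q_i(\alpha)))\le\sum_{i=0}^N\lambda_{Q_{j_i}(\alpha),v}(x(\alpha))+o(h(x(\alpha))),$$
up to the bounded correction coming from $\|Q_i\|_v\le$ a small function (Lemma~\ref{lem2.3}-type estimate, using hypothesis (4)). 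Since $S$ is finite and $\Lambda$ is partitioned into finitely many classes according to which index set $\{j_0,\dots,j_N\}$ is selected at each place, I would pass to an infinite subset $A_1\subset A$ on which the selection is constant in $\alpha$ for each $v$; $A_1$ remains coherent. Now apply Lemma~\ref{lem3.1} to the selected family $\{Q_{j_0},\dots,Q_{j_N}\}$ on $A_1$: there are $k$-linear combinations $P_1=Q_{j_0}, P_2,\dots,P_{n+1}$, with $P_t=\sum_{s} c_{ts}Q_{j_s}$, in weakly general position in $\mathcal R^0_{A_1}[x_0,\dots,x_n]$.

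The key pointwise estimate will be: for each such $v$ and each $\alpha\in A_1$,
$$\sum_{i=0}^N\lambda_{Q_{j_i}(\alpha),v}(x(\alpha))\le (N-n+1)\sum_{t=1}^{n+1}\lambda_{P_t(\alpha),v}(x(\alpha))+o(h(x(\alpha))).$$
To see this, note each $\lambda_{P_t(\alpha),v}(x(\alpha))\ge\lambda_{Q_{j_s}(\alpha),v}(x(\alpha))-O(\log\max_s\|c_{ts}Q_{j_s}(\alpha)\|_v/\|P_t(\alpha)\|_v)$ is not literally true without care, so instead I would argue from the construction in Lemma~\ref{lem3.1}: $P_t$ involves only $Q_{j_1},\dots,Q_{j_{N-n+t}}$, and conversely each $Q_{j_s}$ with $0\le s\le N$ can be grouped so that $\lambda_{Q_{j_s}(\alpha),v}(x(\alpha))\le\max_{t}\lambda_{P_t(\alpha),v}(x(\alpha))+o(h(x(\alpha)))$ — because if $Q_{j_s}(\alpha)(x(\alpha))$ were $v$-small, then some $P_t$ built from it, being a combination with coefficients in $\mathcal K_x$ of terms whose other summands have controlled size, is also $v$-small. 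Carrying this out, the $N+1$ indices $j_0,\dots,j_N$ distribute among the $n+1$ polynomials $P_1,\dots,P_{n+1}$ with multiplicity at most $N-n+1$, giving the stated bound. Summing over $v\in S$ yields
$$\sum_{v\in S}\sum_{i=1}^q\frac1d\lambda_{Q_i(\alpha),v}(x(\alpha))\le \frac{N-n+1}{d}\sum_{v\in S}\sum_{t=1}^{n+1}\lambda_{P_t^{(v)}(\alpha),v}(x(\alpha))+o(h(x(\alpha))),$$
where $P_t^{(v)}$ is the weakly-general-position family attached to the place $v$.

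Next I would linearize. Put $V_L$ = space of homogeneous forms of degree $L$ in $x_0,\dots,x_n$ over $\mathcal R_{A_1,\{Q_i\}}$, with $L=n(d-1)$ (or slightly larger), and consider the maps $x\mapsto (\text{monomials of degree }L)$ composed with a filtration of $V_L$ adapted to the ideals $(P_{t_1}^{(v)},\dots,P_{t_n}^{(v)})$. By Lemma~\ref{3.3}, $\dim V_L/(P_{j_1},\dots,P_{j_n})\cap V_L=d^n$, and the standard Corvaja–Zannier/Dethloff–Tan filtration argument (using the regular-sequence fact Lemma~\ref{2.6} via Lemma~\ref{2.5}) shows that the "local defect" $\sum_{t=1}^{n+1}\lambda_{P_t^{(v)},v}$ is comparable, up to $o(h(x))$ and an explicit combinatorial constant that tends to $\frac{n+1}{d}$ after dividing by $\dim V_L$ and letting $L\to\infty$, to the sum $\sum_{\text{basis }\phi}\lambda_{\phi,v}(\text{Veronese}(x))$ for an auxiliary basis of $V_L$. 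Applying Theorem~\ref{thm2.4} to the moving hyperplanes on $\P(V_L^*)$ given by this basis — algebraic nondegeneracy of $x$ over $\mathcal R_{\{Q_i\}}$ (hypothesis (2)) transfers to linear nondegeneracy of the Veronese image over the appropriate $\mathcal R$, and hypothesis (3) gives the smallness condition — produces an infinite $B\subset A_1$ on which
$$\sum_{v\in S}\sum_{t=1}^{n+1}\frac1d\lambda_{P_t^{(v)}(\alpha),v}(x(\alpha))\le (n+1+\epsilon')h(x(\alpha)).$$
Combining with the previous display and absorbing all $o(h(x(\alpha)))$ terms and the slack $\epsilon'$ into $\epsilon$ gives the desired bound $((N-n+1)(n+1)+\epsilon)h(x(\alpha))$ for all $\alpha\in B$.

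\textbf{Main obstacle.} The delicate step is the passage from the $N+1$-term sum over the $Q_{j_i}$ to the $(n+1)$-term sum over the $P_t^{(v)}$ with the sharp multiplicity $N-n+1$: one must ensure that whenever $Q_{j_i}(\alpha)(x(\alpha))$ is $v$-adically small, this smallness is genuinely inherited by one of the $P_t^{(v)}$ containing it, which requires controlling the contribution of the other summands in $P_t=\sum c_{ts}Q_{j_s}$ — these other terms have $v$-size bounded by $\|x(\alpha)\|_v^d$ times a small function (Lemma~\ref{lem2.3}), but a lower bound for $\|P_t(\alpha)(x(\alpha))\|_v$ in terms of the dominant term is not automatic and needs the coherence of $A_1$ plus a careful bookkeeping of the resultant nonvanishing from Lemma~\ref{lem3.1}. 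A secondary technical point is verifying that the nondegeneracy hypothesis (2) is exactly what is needed for Theorem~\ref{thm2.4} after the Veronese embedding, for every $v$ simultaneously, which forces one to take a common coherent refinement over the finitely many places in $S$.
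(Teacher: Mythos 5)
Your overall architecture coincides with the paper's: select at each place the polynomials with the largest Weil functions, use Lemma \ref{lem3.1} to pass to $n+1$ combinations in weakly general position at the cost of the factor $N-n+1$, then run the Corvaja--Zannier/Dethloff--Tan filtration to reduce to Theorem \ref{thm2.4}. However, the step you yourself flag as the ``main obstacle'' --- the pointwise inequality $\sum_{i=0}^{N}\lambda_{Q_{j_i},v}\le (N-n+1)\sum_t\lambda_{P_t,v}+o(h(x))$ --- is left unresolved, and the mechanism you sketch for it is not correct: it is false in general that if $Q_{j_s}(\alpha)(x(\alpha))$ is $v$-small then some $P_t$ ``built from it'' is $v$-small, since the other summands in $P_t=\sum_s c_{ts}Q_{j_s}$ may be large; and no lower bound for $\|P_t(\alpha)(x(\alpha))\|_v$ in terms of a ``dominant term'', nor any coherence/resultant bookkeeping beyond Lemma \ref{lem3.1} itself, will produce such a statement. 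So the heart of the proof is missing.

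The paper closes exactly this gap by a different, purely order-theoretic bookkeeping. At each $(v,\alpha)$ sort so that $\|Q_{i_1(v)}(\alpha)(x(\alpha))\|_v\le\cdots\le\|Q_{i_q(v)}(\alpha)(x(\alpha))\|_v$ and apply Lemma \ref{lem3.1} to the ordered tuple $(Q_{i_1(v)},\ldots,Q_{i_{N+1}(v)})$, so that $P_1=Q_{i_1(v)}$ and $P_t$ is a combination of $Q_{i_2(v)},\ldots,Q_{i_{N-n+t}(v)}$ only. Then one needs only the trivial triangle-inequality upper bound $\|P_t(\alpha)(x(\alpha))\|_v\le h_v(\alpha)\max_{2\le j\le N-n+t}\|Q_{i_j(v)}(\alpha)(x(\alpha))\|_v=h_v(\alpha)\|Q_{i_{N-n+t}(v)}(\alpha)(x(\alpha))\|_v$, i.e. $\lambda_{Q_{i_{N-n+t}(v)},v}\le\lambda_{P_t,v}+o(h(x))$ for $t=2,\ldots,n$; the ``middle'' indices $i_2(v),\ldots,i_{N-n+1}(v)$ are each bounded by $\lambda_{Q_{i_1(v)},v}=\lambda_{P_1,v}$ directly from the ordering, so $P_1$ alone absorbs multiplicity $N-n+1$; and the indices beyond $i_N(v)$ contribute only $o(h(x))$ because Lemma \ref{lem2.3} applied to the weakly-general-position family $P_1,\ldots,P_{n+1}$ gives $\|x(\alpha)\|_v^d\le g_{0,v}(\alpha)\max_j\|P_j(\alpha)(x(\alpha))\|_v\le g_v(\alpha)\|Q_{i_{N+1}(v)}(\alpha)(x(\alpha))\|_v$. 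Note this last bound is also what justifies your very first reduction to the top $N+1$ Weil functions, so it cannot be obtained ``by pigeonhole'' before invoking Lemma \ref{lem3.1}; and in the linearization the filtration is taken with respect to $P_1,\ldots,P_n$ only, with $L\to\infty$ (not $L=n(d-1)$ fixed) so that $(N-n+1)Lu/(da)\to(N-n+1)(n+1)$.
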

\begin{proof}

We set 
$$\mathcal I=\{(i_1,...,i_{N+1})\ ; 1\le i_j\le q\text{ for } j=1,\ldots,q, i_1,\ldots,i_{N+1}\text{ distinct}\}. $$
For each $I=(i_1,...,i_{N+1})\in\mathcal I$, we denote by $P_{I,1},...,P_{I,n+1}$ the moving hypersurfaces obtained in Lemma \ref{lem3.1} with respect to the family of moving hypersurfaces $\{Q_{i_1},...,Q_{i_{N+1}}\}$. It is easy to see that, for each $v\in S$ there exists a positive function $h_v\in\mathcal C_x$ such that
\begin{align}\label{3.1}
||P_{I,t}(\alpha)(x(\alpha))||_v\le h_v(\alpha)\max_{1\le j\le N+1-n+t}||Q_{i_j}(\alpha)(x(\alpha))||_v, 
\end{align}
for all $I\in\mathcal I$ and $\alpha\in A$ outside a finite subset. Here the function $h_v$ may be chosen common for all $I\in\mathcal I$.

Replacing $A$ by an infinite subset, we may assume that: (\ref{3.1}) holds for all $\alpha\in A$, and for every $v\in S$ there is a permutation $(i_1(v),...,i_q(v))$ of $(1,...,q)$ such that
$$ ||Q_{i_1(v)}(\alpha)(x(\alpha))||_v\le ||Q_{i_2(v)}(\alpha)(x(\alpha))||_v\le\cdots\le ||Q_{i_q(v)}(\alpha)(x(\alpha))||_v. $$
Let $I(v)=(i_1(v),...,i_{N+1}(v))$. Since $P_{I(v),1},\ldots,P_{I(v),n+1}$ are in weakly general position, there exist functions $g_{0,v},g_v\in\mathcal C_x$, which may be chosen independent of $I$ and $\alpha$, such that
$$ ||x(\alpha)||_v^d\le g_{0,v}(\alpha)\max_{1\le j\le n+1}||P_{I(v),j}(\alpha)(x(\alpha))||_v\le g_v(\alpha)||Q_{i_{N+1}(v)}(\alpha)(x(\alpha))||_v.$$
Therefore, we have
\begin{align*}
\prod_{i=1}^q&\dfrac{||x(\alpha)||_v^d}{||Q_i(\alpha)(x(\alpha))||_v}\le g_v^{q-N}(\alpha)\prod_{j=1}^{N}\dfrac{||x(\alpha)||_v^d}{||Q_{i_j(v)}(\alpha)(x(\alpha))||_v}\\
&\le g_v^{q-N}(\alpha)h_v^{n-1}(\alpha)\dfrac{||x(\alpha)||_v^{Nd}}{\bigl (\prod_{j=2}^{N-n+1}||Q_{i_j(v)}(\alpha)(x(\alpha))||_v\bigl )\cdot\prod_{j=1}^{n}||P_{I(v),j}(\alpha)(x(\alpha))||_v}\\
&\le g_v^{q-N}(\alpha)h_v^{n-1}(\alpha)\dfrac{||x(\alpha)||_v^{Nd}}{||P_{I(v),1}(\alpha)(x(\alpha))||_v^{N-n+1}\cdot\prod_{j=2}^{n}||P_{I(v),j}(\alpha)(x(\alpha))||_v}\\
&\le g_v^{q-N}(\alpha)h_v^{n-1}(\alpha)\zeta_v^{(N-n)(n-1)}(\alpha)\dfrac{||x(\alpha)||_v^{Nd+(N-n)(n-1)d}}{\prod_{j=1}^{n}||P_{I(v),j}(\alpha)(x(\alpha))||_v^{N-n+1}},
\end{align*}
where $I(v)=(i_1(v),...,i_{N+1}(v))$ and $\zeta_v$ is a function in $\mathcal C_x$, which is chosen common for all $I(v)\in\mathcal I$, such that 
$$||P_{I(v),j}(\alpha)(x(\alpha))||_v\le\zeta_v(\alpha) ||x(\alpha)||_v^d,\ \forall v\in S$$
and for all $\alpha\in A$ outside a finite subset (then by replacing $A$ by an infinite subset we may assume that this inequality holds for all $\alpha\in A$).

The above inequality implies that
\begin{align}\label{3.2}
\begin{split}
\log\prod_{i=1}^q\dfrac{||x(\alpha)||_v^d}{||Q_i(\alpha)(x(\alpha))||_v}\le&\log(g_v^{q-N}h_v^{n-1}\zeta_v^{(N-n)(n-1)})(\alpha)\\
&+(N-n+1)\log\dfrac{||x(\alpha)||_v^{nd}}{\prod_{j=1}^{n}||P_{I(v),j}(\alpha)(x(\alpha))||_v}.
\end{split}
\end{align}
Now, for each non-negative integer $L$, we denote by $V_L$ the vector space (over $\mathcal R^0_{A,\{Q_i\}}$) consisting of all homogeneous polynomials of degree $L$ in $\mathcal R^0_{A,\{Q_i\}}[x_0,\ldots ,x_n]$ and of the zero polynomial. Denote by $(P_{I,1},\ldots ,P_{I,n})$ the ideal in $\mathcal R^0_{A,\{Q_i\}}[x_0,\ldots ,x_n]$ generated by $P_{I,1},\ldots ,P_{I,n}$.

For each positive integer $L$ divisible by $d$ and for each $(i)=(i_1,\ldots,i_n)\in\mathbf N^n_0$ with $||(i)||=\sum_{s=1}^ni_s\le\frac{L}{d}$, we set
$$W^I_{(i)}=\sum_{(j)=(j_1,\ldots ,j_n)\ge (i)}P_{I,1}^{j_1}\cdots P_{I,n}^{j_n}\cdot V_{L-d||(j)||}. $$
It is clear that $W^I_{(0,\ldots,0)}=V_L$ and $W^I_{(i)}\supset W^I_{(j)}$ if $(i)<(j)$ in the lexicographic ordering. Hence, $W^I_{(i)}$ is a filtration of $V_L$.
  
Let $(i)=(i_1,\ldots ,i_n),(i')=(i_1',\ldots ,i_n')\in\mathbf N^n_0$. Suppose that $(i')$ follows $(i)$ in the lexicographic ordering and $d||(i)||<L$. We consider the following vector space homomorphism
$$\varphi:\gamma\in V_{L-d||(i)||}\mapsto [P_{I,1}^{i_1}\cdots P_{I,n}^{i_n}\gamma]\in\dfrac{W^I_{(i)}}{W^I_{(i')}}, $$
where $[P_{I,1}^{i_1}\cdots P_{I,n}^{i_n}\gamma]$ is the equivalent class in $\frac{W^I_{(i)}}{W^I_{(i')}}$ containing $P_{I,1}^{i_1}\cdots P_{I,n}^{i_n}\gamma$.
We see that $\varphi$ is surjective. 
\begin{claim} $\ker\varphi = (P_{I,1},\ldots ,P_{I,n})\cap V_{L-d||(i)||}$.
\end{claim}
In fact, for any $\gamma\in\ker\varphi$, we have
\begin{align*}
P_{I,1}^{i_1}\cdots P_{I,n}^{i_n}\gamma& =\sum_{(j)=(j_1,\ldots,j_n)\ge (i')}P_{I,1}^{j_1}\cdots P_{I,n}^{j_n}g_{(j)}\ \
&=\sum_{(j)=(j_1,\ldots,j_n)> (i)}P_{I,1}^{j_1}\cdots P_{I,n}^{j_n}g_{(j)},
\end{align*}
where $g_{(j)}\in V_{L-d||(j)||}$. By Lemma \ref{2.5} and Lemma \ref{2.6}, we have $\gamma\in (P_{I,1},\ldots ,P_{I,n})$. Then
$$\ker\varphi\subset (P_{I,1},\ldots ,P_{I,n})\cap V_{L-d||(i)||}. $$
Conversely, for any $\gamma\in (P_{I,1},\ldots ,P_{I,n})\cap V_{L-d||(i)||},\ (\gamma\ne 0)$, we have
$$\gamma =\sum_{s=1}^nP_{I,s} h_s,\ \ h_s\in V_{L-d(||(i)||+1)}. $$
It implies that
$$\varphi (\gamma)=\sum_{s=1}^n[P_{I,1}^{i_1}\cdots P_{I,s-1}^{i_s-1}P_{I,s}^{i_s+1}P_{I,s+1}^{i_s+1}\cdots P_{I,n}^{i_n}h_s]. $$
It is clear that $P_{I,1}^{i_1}\cdots P_{I,s-1}^{i_s-1}P_{I,s}^{i_s+1}P_{I,s+1}^{i_s+1}\cdots P_{I,n}^{i_n}h_s\in W^I_{(i')}$, and hence $\varphi (\gamma)=0$, i.e., $\gamma\in\ker\varphi$. Therefore, we have
$$\ker\varphi = (P_{I,1},\ldots ,P_{I,n})\cap V_{L-d||(i)||}.$$
Hence the claim is proved.

This claim yields that
\begin{align}\label{3.4}
m^I_{(i)}:=\dim\dfrac{W^I_{(i)}}{W^I_{(i')}}=\dim\dfrac{V_{L-d||(i)||}}{(P_{I1},\ldots ,P_{In})\cap V_{L-d||(i)||}}.
\end{align}

Fix a number $L$ large enough (chosen later). Set $u=u_L:=\dim V_L=\binom{L+n}{n}$. We assume that 
$$ V_L=W^I_{(i)_1}\supset W^I_{(i)_2}\supset\cdots\supset W^I_{(i)_K}, $$
where $W^I_{(i)_{s+1}}$ follows $W^I_{(i)_s}$ in the ordering and $(i)_K=(\frac{L}{d},0,\ldots ,0)$. It is easy to see that $K$ is the number of $n$-tuples $(i_1,\ldots,i_n)$ with $i_j\ge 0$ and $i_1+\cdots +i_n\le\frac{L}{d}$. Then we have
$$ K =\binom{\frac{L}{d}+n}{n}.$$
For each $k\in\{1,\ldots ,K-1\}$ we set $m^I_k=\dim\frac{W^I_{(i)_k}}{W^I_{(i)_{k+1}}}$, and set $m^I_K=1$. Then by Lemma \ref{3.3}, $m^I_k$ does not depends on $\{P_{I,1},\ldots ,P_{I,n}\}$ and $k$, but only on $||(i)_k||$. Hence, we set $m_k:=m^I_k$ and $m(l):=m^I_k$, where $l=||(i)_k||$. We also note that by Lemma \ref{3.3}
\begin{align}\label{3.5}
m_k=d^n
\end{align}
for all $k$ with $L-d||(i)_k||\ge n(d-1)$ (it is equivalent to $||(i)_k||\le\dfrac{L}{d}-n$).

From the above filtration, for each $v\in S$, we may choose a basis $\{\psi^{I(v)}_1,\cdots,\psi^{I(v)}_u\}$ of $V_L$ such that  
$$\{\psi^{I(v)}_{u-(m_s+\cdots +m_K)+1},\ldots ,\psi^{I(v)}_u\}$$
 is a basis of $W^{I(v)}_{(i)_s}$. For each $k\in\{1,\ldots,K\}$ and $l\in\{u-(m_k+\cdots +m_k)+1,\ldots, u-(m_{k+1}+\cdots +m_k)\}$, we may write
$$\psi^{I(v)}_l=P_{{I(v)}1}^{i_{1k}}\cdots P_{{I(v)}n}^{i_{nk}}h_{v,l},\ \text{ where } (i_{1k},\ldots,i_{nk})=(i)_k, h_{v,l}\in W^{I(v)}_{L-d||(i)_k||}. $$
We may choose $h_{v,l}$ to be a monomial.

We have the following estimates:
Firstly, we see that
$$\sum_{k=1}^Km_ki_{sk}=\sum_{l=0}^{\frac{L}{d}}\sum_{k:||(i)_k||=l}m(l)i_{sk}=\sum_{l=0}^{\frac{L}{d}}m(l)\sum_{k:||(i)_k||=l}i_{sk}. $$
Note that, by the symmetry $(i_1,\ldots,i_n)\rightarrow (i_{\sigma (1)},\ldots ,i_{\sigma (n)})$ with $\sigma\in S(n)$,  $\sum_{k:||(i)_k||=l}i_{sk}$ does not depend on $s$. We set 
$$ a:=\sum_{k=1}^Km_ki_{sk},\ \text{ which is independent of $s$ and $I$}.$$

Then we have
\begin{align*}
||\psi^{I(v)}_l(\alpha)(x(\alpha))||_v&\le ||P_{I(v),1} (\alpha)(x(\alpha))||_v^{i_{1k}}\cdots ||P_{I(v),n} (\alpha)(x(\alpha))||_v^{i_{nk}}||h_{v,l}(\alpha)(x(\alpha))||_v\\
&\le c_{v,l}(\alpha)||P_{I(v),1}(\alpha)(x(\alpha))||^{i_{1k}}_v\cdots ||P_{I(v),n}(\alpha)(x(\alpha))||_v^{i_{nk}}||x(\alpha)||_v^{L-d||(i)_k||}\\
&=c_{v,l}(\alpha)\left (\dfrac{||P_{I(v),1}(\alpha)(x(\alpha))||_v^{i_{1k}}}{||x(\alpha)||_v^d}\right)^{i_{1k}}\cdots\left (\dfrac{||P_{I(v),n} (\alpha)(x(\alpha))||_v}{||x(\alpha)||_v^d}\right)^{i_{nk}}||x(\alpha)||_v^L,
\end{align*} 
where $c_{v,l}\in\mathcal C_x$. Taking the product on both sides of the above inequalities over all $l$ and then taking logarithms, we obtain
\begin{align}
\nonumber
\log\prod_{l=1}^u||\psi^{(I(v))}_l(\alpha)(x(\alpha))||_v&\le\sum_{k=1}^Km_k\biggl (i_{1k}\log\dfrac{||P_{I(v),1}(\alpha)(x(\alpha))||_v}{||x(\alpha)||_v^d}+\cdots\\
\label{3.6}
&+i_{nk}\log\dfrac{||P_{I(v),n}(\alpha)(x(\alpha))||_v}{||x(\alpha)||_v^d}\biggl)+uL\log ||x(\alpha)||_v+\log c_{v}(\alpha),
\end{align}
where $c_v=\prod_{l=1}^uc_{v,l}\in\mathcal C_x$.

Hence, (\ref{3.6}) gives
\begin{align*}
\log\prod_{l=1}^u||\psi^{I(v)}_l(\alpha)(x(\alpha))||_v&\le a\left(\log\prod_{i=1}^n\dfrac{||P_{I(v),i}(\alpha)(x(\alpha))||_v}{||x(\alpha)||_v^d}\right)+uL\log ||x(\alpha)||_v+\log c_{v}(\alpha),
\end{align*}
i.e.,
\begin{align*}
a\left(\log\prod_{i=1}^n\dfrac{||x(\alpha)||_v^d}{||P_{I(v),i}(\alpha)(x(\alpha))||_v}\right)\le\log\prod_{l=1}^u\frac{||x(\alpha)||_v^L}{||\psi^{I(v)}_l(\alpha)(x(\alpha))||_v}+\log c_v(\alpha),
\end{align*}

Set $c_{0,v}=g_v^{q-N}h_v^{n-1}\zeta_v^{(N-n)(n-1)}(1+c_v^{(N-n+1)/A})\in\mathcal C_x$. Combining the above inequality with (\ref{3.2}), we obtain that
\begin{align}\label{3.8}
\log\prod_{i=1}^q\dfrac{||x(\alpha)||_v^d}{||Q_i(\alpha)(x(\alpha))||_v}\le\frac{N-n+1}{a}\log\prod_{l=1}^u\dfrac{||x(\alpha)||_v^L}{||\psi^{I(v)}_l(\alpha)(x(\alpha))||_v}+\log c_{0,v}(\alpha).
\end{align}

We now fix a basic $\{\phi_1,...,\phi_u\}$ of $V_L$ such that $\phi_i$ is a monomial of the form $x_0^{i_0}...x_n^{i_n}$. Then $\{\psi^{I(v)}_j\}_{j=1}^u$ can be written as independent linear forms $\{L^{I(v)}_j\}$ in $\phi_1,...,\phi_u$, where $\psi^{I(v)}_j(x)=L^{I(v)}_j(F(x))$ with $F(x)=(\phi_1(x):\cdots :\phi_u(x))$.
Then we have
$$ ||\psi^{I(v)}_l(\alpha)(x(\alpha))||_v=||L^{I(v)}_l(\alpha)(x(\alpha))||_v,\ \ 1\le l\le u, \alpha\in A.$$
Hence, we see that
\begin{align*}
\log\prod_{l=1}^u\dfrac{||x(\alpha)||_v^N}{||\psi^{I(v)}_l(\alpha)(x(\alpha))||_v}&=\left (\log\prod_{l=1}^u\frac{||F(x(\alpha))||_v||L^{I(v)}_l(\alpha)||_v}{||L^{I(v)}_l(\alpha)(x(\alpha))||_v}-u\log ||F(x(\alpha))||_v\right )\\
&+uL\log ||x(\alpha)||_v-\log\prod_{l=1}^u||L^{I(v)}_l(\alpha)||_v.
\end{align*}
Combining this inequality with (\ref{3.8}), we get
\begin{align}\nonumber
\frac{a}{N-n+1}&\log\prod_{i=1}^q\dfrac{||x(\alpha)||_v^d||Q_i(\alpha)||_v}{||Q_i(\alpha)(x(\alpha))||_v}\le \biggl (\log\prod_{l=1}^u\frac{||F(x(\alpha))||_v||L^{I(v)}_l(\alpha)||_v}{||L^{I(v)}_l(\alpha)(x(\alpha))||_v}\\
\nonumber
&-u\log ||F(x(\alpha))||_v\biggl )+uL\log ||x(\alpha)||_v-\log\prod_{l=1}^u||L^{I(v)}_l(\alpha)||_v\\
\label{4.11}
&+\frac{a}{N-n+1}\biggl (\log c_{0,v}(\alpha)+\log\prod_{i=1}^q ||Q_i(\alpha)||_v\biggl ).
\end{align}
Since $\phi_i$ is chosen to be a monomial for every $1\le i\le u$, we now write
$$\psi^{I(v)}_l=\sum_{i=1}^uc^{I(v)}_{i,l}\phi_i\in V_L,\ \ c^{I(v)}_{i,l}\in\mathcal R_{A,\{Q_i\}}, $$
for all $l\in\{1,\ldots ,u\}$. This yields that
$$ L^{I(v)}_l(y_1,...,y_u)=\sum_{i=1}^uc^{I(v)}_{i,l}y_i,\ 1\le l\le u. $$

\begin{claim}\label{cl3.1}
For every $\epsilon'>0$, we have
\begin{align*}
\sum_{v\in S}\log\prod_{l=1}^u\frac{||F(x(\alpha))||_v||L^{I(v)}_l(\alpha)||_v}{||L^{I(v)}_l(\alpha)(x(\alpha))||_v}\le (Lu+\epsilon')h(x(\alpha)),
\end{align*}
for all $\alpha$ in an infinite subset of $A$. 
\end{claim}
In deed, we will show that the family of linear forms $\{L^{I(v)l}_l\}_{1\le l\le u}$ will satisfy the assumptions (1), (2) and (3) of Theorem \ref{thm2.4} with respect to the collection of points $F(x): A\rightarrow\P^n(k)$ as follows:

\textbf{Step 1.} We verify the condition (1) of Theorem \ref{thm2.4}. Fix $v\in S$. Since $\{L^{I(v)}_l\}_{1\le l\le u}$ is linear independent over $\mathcal R_{A,\{Q_i\}}$, the determinant $\det (c^{I(v)}_{i,l})_{1\le i,l\le u}\ne 0$ in $\mathcal R_{A,\{Q_i\}}$. Then $\det (c^{I(v)}_{i,l}(\alpha))_{1\le i,l\le u}\ne 0$ for all $\alpha\in A$ outside a finite subset. By passing to an infinite subset of $A$, we may assume that  $\det (c^{I(v)}_{i,l}(\alpha))_{1\le i,l\le u}\ne 0$ for all $\alpha\in A$. This means that $\{L^{I(v)}_l\}_{1\le l\le u}$ is in general position.
 
\textbf{Step 2.} We verify the condition (2) of Theorem \ref{thm2.4}. Suppose to the contrary that $F(x)$ is linear degenerate over $\mathcal R_{\{L^{I(v)}_l\}_{1\le l\le u}}.$ Then there is an infinite subset $B$ of $A$ which is coherent with respect to $\{L^{I(v)}_l\}_{1\le l\le u}$ and a non-zero linear form 
$$L=\sum_{i=1}^ua_iy_i,\ a_i\in\mathcal R_{B,\{L^{I(v)}_l\}_{l=1}^u}$$
 such that
$$ L(\alpha)(F(x(\alpha)))=0 $$
for all $\alpha\in B$ outside a finite subset.  This implies that
$$ Q(\alpha)(F(x(\alpha)))=0, $$
where $Q=\sum_{i=1}^ua_i\phi_i$ is a non-zero homogeneous polynomial in $\mathcal R_{B,\{L^{I(v)}_l\}_{l=1}^u}[x_0,...,x_n]\subset\mathcal R_{B,\{Q_i\}_{i=1}^q}$, for all $\alpha\in B$ outside a finite set. This contradicts the algebraically non-degeneracy of $x$ over $\mathcal R_{\{Q_i\}_{i=1}^q}$.

\textbf{Step 3.} We verify the condition (3) of Theorem \ref{thm2.4}. Since all coefficients of $L^{I(v)}_l\ (1\le l\le u)$ are coefficients of homogeneous polynomials $P^{j_1}_{I(v)1}...P^{j_n}_{I(v)n}$, hence are linear combinations of the coefficients of $Q^{j_1}_{i_1(v)}...Q^{j_{N+1}}_{i_{N+1}(v)}$, there is a positive constant $c_L$ (depending only on $L$) such that
$$ ||L^{I(v)}_l(\alpha)||_\omega=\max_{j}||c^{I(v)}_{j,l}(\alpha)||_\omega\le c_L\max_{1\le j\le n}||Q_{i_j(v)}(\alpha)||_\omega^{N/d}$$
for $\omega\in M_k$. Here, we would like to note that $Q_{i_j(v)}$ has at least one coefficient equal to $1$. Taking the product on both sides of this inequality over all $\omega\in M_k$, we obtain 
\begin{align*}
h(L^{I(v)}_l(\alpha))\le \log c_L+\frac{L}{d}\sum_{i=1}^qh(Q_i(\alpha)).
\end{align*}
We also see that
$$ h(F(x(\alpha)))=Lh(x(\alpha))\text{ and }h(Q_i(\alpha))=o(h(x(\alpha))). $$
Therefore, we have
$$h(L^{I(v)}_l(\alpha))=o(h(x(\alpha)))=o(F(x(\alpha))).$$

Then, by Theorem \ref{thm2.4}, there exists an infinite subset $A'\subset A$ such that
\begin{align*}
\sum_{v\in S}\log\prod_{l=1}^u\frac{||F(x(\alpha))||_v||L^{I(v)}_l(\alpha)||_v}{||L^{I(v)}_l(\alpha)(x(\alpha))||_v}\le (u+\epsilon'/L)h(F(x(\alpha)))=(Lu+\epsilon')h(x(\alpha)),
\end{align*}
for all $\alpha\in A'$. Hence the claim is proved.

Without loss of generality we may assume that $A'=A$. Then we may assume that the claim holds for all $\alpha\in A$.

\begin{claim} For every $v\in S$, we have
\begin{align*}
\log ||L^{I(v)}_l(\alpha)||_v &=o(h(x(\alpha))),\\
\log ||Q_i(\alpha)||_v&=o(h(x(\alpha))),
\end{align*}
for $\alpha\in A$.
\end{claim}
Indeed, we have $\log ||L^{I(v)}_l(\alpha)||_v=\max_{j}\log ||c^{I(v)}_{j,l}(\alpha)||_v$ and for each $c^{I(v)}_{j,l}\not\equiv 0$,
\begin{align*}
\log ||c^{I(v)}_{j,l}(\alpha)||_v&\le\sum_{\omega\in M_k}\log^{+}||c^{I(v)}_{j,l}(\alpha)||_\omega =h(c^{I(v)}_{j,l}(\alpha))=o(h(x(\alpha))),\\ 
\log ||c^{I(v)}_{j,l}(\alpha)||_v&\ge \sum_{\omega\in M_k;||c^{I(v)}_{j,l}(\alpha)||_\omega <1}\log ||c^{I(v)}_{j,l}(\alpha)||_\omega =-h(c^{I(v)}_{j,l}(\alpha))=o(h(x(\alpha))),
\end{align*}
Therefore, we have
\begin{align*}
\log ||L^{I(v)}_l(\alpha)||_v&\le \sum_{\omega\in S; ||c^{I(v)}_{j,l}(\alpha)||_v\ge 1}\log ||c^{I(v)}_{j,l}(\alpha)||_v=o(h(x(\alpha))),\\
\log ||L^{I(v)}_l(\alpha)||_v&\ge \sum_{\omega\in S; ||c^{I(v)}_{j,l}(\alpha)||_v< 1}\log ||c^{I(v)}_{j,l}(\alpha)||_v=o(h(x(\alpha))).
\end{align*}
Thus
$$ \log ||L^{I(v)}_l(\alpha)||_v=o(h(x(\alpha))) $$
with $\alpha\in A$. Similarly, we have
$$ \log ||Q_i(\alpha)||_v=o(h(x(\alpha))) $$
for $\alpha\in A$. Hence, the claim is proved.

Summing both sides of (\ref{3.11}) over all $v\in S$ and using the above two claims with the note that $\log ||F(x(\alpha))||_v=L\log ||x(\alpha)||_v+o(h(x(\alpha)))$, we obtain
\begin{align*}
\frac{a}{N-n+1}\sum_{v\in S}\sum_{i=1}^q\lambda_{Q_i(\alpha),v}(x(\alpha))\le (Lu+\epsilon')h(x(\alpha))+o(h(x(\alpha)))
\end{align*}
with $\alpha\in A$. This is equivalent to 
\begin{align}\label{3.9}
\sum_{v\in S}\sum_{i=1}^q\frac{1}{d}\lambda_{Q_i(\alpha),v}(x(\alpha))\le \frac{N-n+1}{da}(Lu+\epsilon')h(x(\alpha))+o(h(x(\alpha))).
\end{align}

We need some estimate for $a$ and $u$. Firstly, we have
$$ u=\binom{L+n}{n}=\frac{(L +1)\ldots (L+n)}{1\ldots n}=\frac{L^n}{n!}+O(L^{n-1}).$$
For each $I_k=(i_{1k},\ldots ,i_{nk})$ with $||(i)_k||\le \frac{L}{d}-n$, we set 
$$i_{(n+1)k}=\dfrac{L}{d}-n-\sum_{s=1}^ni_s.$$
 Since the number of nonnegative integer $p$-tuples with summation $\le T$ is equal to the number of nonnegative integer $(p+1)$-tuples with summation exactly equal to $T\in\mathbf Z$, which is $\binom{T+n}{n}$, and since the sum below is independent of $s$, we have
\begin{align*}
a&=\sum_{||(i)_k||\le\frac{L}{d}}m^I_ki_{sk}\ge \sum_{||(i)_k||\le\frac{L}{d}-n}m^I_ki_{sk}=\dfrac{d^n}{n+1}\sum_{||(i)_k||\le\frac{L}{d}-n}\sum_{s=1}^{n+1}i_{sk}\\
&=\dfrac{d^n}{n+1}\cdot \binom{\frac{L}{d}}{n}\cdot (\dfrac{L}{d}-n)=d^n\binom{\frac{L}{d}}{n+1}\\
&=d^n\left (\frac{L^{n+1}}{d^{n+1}(n+1)!}+O(L^n)\right).
\end{align*}
Therefore,
$$ \dfrac{Lu+\epsilon'}{da}=\frac{\frac{L^{n+1}}{n!}+O(L^n)}{\frac{L^{n+1}}{(n+1)!}+O(L^n)}.$$
Then for $\epsilon >0$, we may choose $L$ big enough such that
$$ \dfrac{Lu+\epsilon'}{da}\le (n+1)+\frac{\epsilon}{2(N-n+1)}.$$
Hence (\ref{3.9}) implies that
$$ \sum_{v\in S}\sum_{i=1}^q\frac{1}{d}\lambda_{Q_i(\alpha),v}(x(\alpha))\le \left ((N-n+1)(n+1)+\frac{\epsilon}{2}\right )h(x(\alpha))+o(h(x(\alpha))).
 $$
Since the term $o(h(x(\alpha)))$ will be dominated by $\frac{\epsilon}{2}h(x(\alpha))$, the above inequality yields that
$$  \sum_{v\in S}\sum_{i=1}^q\frac{1}{d}\lambda_{Q_i(\alpha),v}(x(\alpha))\le \left ((N-n+1)(n+1)+\epsilon\right )h(x(\alpha)).$$
The theorem is proved.
\end{proof}

\begin{proof}[{\sc Proof of Main Theorem}]
Assume that
$$Q_i=\sum_{I\in\mathcal T_{d_i}}a_{i,I}x^I,\ a_{i,I}\in\mathcal R^0_\Lambda.$$
By changing the homogeneous coordinates of $\P^n(k)$ if necessary, we may assume that $a_{iI^i_0}\ne 0$ for all $i=1,...,q$, where $I^i_0=(d_i,0,...,0)$. 

Let $A$ be an infinite subset of $\Lambda$ which is coherent with respect to $\{Q_i\}_{i=1}^q$. Replacing $A$ by an infinite subset with finite complement, we may assume that $a_{iI^i_0}(\alpha)\ne 0$ for all $\alpha\in A$. By the assumption that $x$ is algebraically nondegenerate over $\mathcal R_{\{Q_i\}_{i=1}^q}$, we have for each $Q_i$,
$$ Q_i(\alpha)(x(\alpha))\ne 0 $$
for all but finite many $\alpha\in A$. Then by passing to an infinite subset with finite complement of $A$, we may assume that
$$ Q_i(\alpha)(x(\alpha))\ne 0 $$
for all $i=1,...,q$ and $\alpha\in A.$

We set
$$ \tilde Q_i=\left (\dfrac{1}{a_{iI^i_0}}Q_i\right)^{d/d_i}, $$
where $d$ is the least common multiple of $d_1,...,d_q$. Then we see that $\tilde Q_i\ (1\le i\le q)$ have the same degree $d$ and if we write
$$ \tilde Q_i=\sum_{I\in\mathcal T_d}\tilde a_{i,I}x^I $$
then $a_{i,I_0}=1$, where $I_0=(d,0,...,0)$.

We now verify that $A$ is coherent with respect to $\{\tilde Q_i\}_{i=1}^q$. Indeed, suppose that we have a non-zero polynomial $P(...,x_{1,I},...,x_{i,J},...,x_{q,K},...)$, $I,J,K\in\mathcal T_d$, which is homogeneous in $(...,x_{i,I},...)$ for each $i\in\{1,...,q\}$ and satisfies
\begin{align}\label{3.10}
P(...,\tilde a_{1,I}(\alpha),...,\tilde a_{i,J}(\alpha),...,\tilde a_{q,K}(\alpha),...)\ne 0.
\end{align} 
for some $\alpha\in A$. Then 
\begin{align}\label{3.11}
P(...,a_{1I^1_0}^{d/d_1}\tilde a_{1,I}(\alpha),...,a_{i,I^i_0}^{d/d_i}\tilde a_{iJ}(\alpha),...,a_{qI^q_0}^{d/d_q}\tilde a_{qK}(\alpha),...)\ne 0
\end{align} 
for some $\alpha\in A$. Since $A$ is coherent with respect to $\{Q_i\}_{i=1}^q$, (\ref{3.11}) holds for all $\alpha\in A$ outside a finite subset. This yields that $A$ is also coherent with respect to $\{\tilde Q_i\}_{i=1}^q$.

It is easy to see that $\{\tilde Q_i\}_{i=1}^q$ is in $N$-subgeneral position. And for an infinite subset $\mathcal B$ of $\Lambda$ we have
$$ \mathcal R_{\mathcal B,\{\tilde Q_i\}_{i=1}^q}=\mathcal R_{\mathcal B,\{Q_i\}_{i=1}^q},$$
and hence $x$ is algebraically non-degenerate over $\mathcal R_{\{\tilde Q_i\}_{i=1}^q}$. Moreover, we have
$$ h(\tilde Q_i(\alpha))=\dfrac{d}{d_i}h(Q_i(\alpha))=o(h(x(\alpha))),\ \ i=1,...,q.$$ 

Therefore, the family $\{\tilde Q_i\}_{i=1}^n$ satisfies the assumptions of Theorem \ref{thm3.2} with respect to the collection of points $x$. Applying Theorem \ref{thm3.2},  there exists an infinite index subset $B\subset A\subset\Lambda$ such that
$$\sum_{v\in S}\sum_{i=1}^q\frac{1}{d}\lambda_{\tilde Q_i(\alpha),v}(x(\alpha))\le ((N-n+1)(n+1)+\epsilon) h(x(\alpha)),$$
for all $\alpha\in B$. Since $\frac{1}{d}\lambda_{\tilde Q_i(\alpha),v}(x(\alpha))=\frac{1}{d_i}\lambda_{Q_i(\alpha),v}(x(\alpha))$, the above inequality implies that
$$\sum_{v\in S}\sum_{i=1}^q\frac{1}{d_i}\lambda_{Q_i(\alpha),v}(x(\alpha))\le ((N-n+1)(n+1)+\epsilon) h(x(\alpha)),$$
for all $\alpha\in B$. This proves the theorem.
\end{proof}

\noindent
{\bf Acknowledgements.} This research is funded by Vietnam National Foundation for Science and Technology Development (NAFOSTED) under grant number 101.04-2015.03.

\vskip0.2cm
{\footnotesize 
\noindent
{\sc Si Duc Quang}\\
$^1$ Department of Mathematics,\\
Hanoi National University of Education,\\
136-Xuan Thuy, Cau Giay, Hanoi, Vietnam.\\
$^2$ Thang Long Institute of Mathematics and Applied Sciences,\\
Nghiem Xuan Yem, Hoang Mai, HaNoi, Vietnam.\\
\textit{E-mail}: quangsd@hnue.edu.vn

\end{document}